\newtheorem{thm}{Theorem}[section]
\newtheorem{prp}[thm]{Proposition}
\newtheorem{lma}[thm]{Lemma}
\theoremstyle{definition}
\newtheorem{exa}{Example}
\newtheorem{rem}{Remark}
\def \ph{\varphi}
\def\diff{\operatorname{d}}
\def \ra{\rightarrow}
\def \ie{\hbox{\it i.e.}}
\def \tns{\otimes}
\def \mtns{\tns\cdots\tns}
\def \mplus{+\cdots+}
\def \mcom{,\cdots,}
\def \k{\mbox{$\mathbb K$}}
\def \C{\mbox{$\mathbb C$}}
\def \Z{\mbox{$\mathbb Z$}}
\def \N{\mbox{$\mathbb N$}}
\def \MDer{\operatorname{MDer}}
\def \Der{\operatorname{Der}}
\def\zt{\mbox{$\Z_2$}}
\def\ad{\operatorname{ad}}
\def\im{\operatorname{Im}}
\def\A{\mbox{$\mathcal A$}}
\def\D{D}
\def\der{\operatorname{Der}}
\def\pinf{\mbox{$P_\infty$}}
\def\and{\mbox{ \rm and }}
\def\s#1{(-1)^{#1}}
\def\ho{\text{ho}}
\def\Aut{\operatorname{Aut}}
\def\px{\partial_x}
\def\py{\partial_y}
\def\pt{\partial_\theta}
\def\gcd{\operatorname{gcd}}
\def\Div{\operatorname{Div}}
\def\vn{\vec{\nabla}}
\def\del{\partial}
\def\cdits{\!\cdot\cdot\,}
\author{Michael Penkava}
\address{University of Wisconsin\\
Eau Claire, WI 54702-4004\\
USA}
\email{penkavmr@uwec.edu}
\author{Anne Pichereau}
\address{
Universit\'e de Lyon, Institut Camille Jordan (UMR 5208) \\
Universit\'e de Saint-Etienne, Facult\'{e} des Sciences\\
 23, Rue Docteur Paul Michelon, 42023 Saint-Etienne
Cedex 2,
France} \email{anne.pichereau@univ-st-etienne.fr} \subjclass{14D15,
13D10, 14B12, 16S80, 16E40, 17B55, 17B70} \keywords{Poisson algebras, $\zt$-graded algebras, Poisson cohomology, deformations}
\thanks{The research of the authors was partially supported by the
french ANR projet
ANR-09-RPDOC-009-01, and by grants
from the University of Wisconsin-Eau Claire}
\title[\zt-graded Poisson Algebras]{\zt-graded Poisson Algebras, their deformations and cohomology in low dimensions}
\begin{document}
\setlength{\multlinegap}{0pt}

\begin{abstract}
We study \zt-graded Poisson structures defined on \zt-graded commutative polynomial algebras. In small dimensional cases, we obtain the associated Poisson \zt-graded cohomology and in some cases, deformations of these Poisson brackets and \pinf-algebra structures. We highlight differences and analogies between this \zt-graded context and the non graded context, by studying for example the links between Poisson cohomology and singularities.
\end{abstract}

\date\today
\maketitle
\section{Introduction}

In \cite{Kont}, M. Kontsevich has shown that every Poisson manifold $(M,툎pi)$ admits a deformation quantization, i.e., there exists a formal deformation of the underlying associative product of $C^\infty(M)$, whose first term is the initial associative product and the second term is the Poisson bracket $\pi$. He also proved that there is a one-to-one correspondence between the formal deformations of the associative product whose second term is $\pi$ and the formal deformations of the Poisson structure $\pi$. The results obtained and the methods used in his paper have motivated a lot of research areas, especially the study of deformations and consequently the study of cohomology, as cohomology is strongly related to deformations (of associative products, as well as of Poisson brackets). The second cohomology space gives indeed important informations about classification of deformations, while the third cohomology space gives the obstruction to the construction, at each step, of the deformation.

It is in general difficult to determine  Poisson cohomology. It has been defined by A. Lichnerowicz in \cite{Lichne}, but see also \cite{Hueb} for an algebraic approach. In an algebraic context, that is when the algebra of smooth functions $C^\infty(M)$ is replaced by the polynomial algebra $\C[x_1, \dots,x_d]$, and in low dimensions ($d\leq 4$), it has been the subject of several publications: for example, when $d=2$ by P. Monnier in \cite{Monnier} (except that this work is in a germified context) and by C. Roger and P. Vanhaecke in \cite{RogerVanhaecke}, when $d=3$ by the second author in \cite{Pichereau1}, and when $d=4$ by S. Pelap in \cite{Pelap}, ... These different works have shown that even in an algebraic and low dimensional context, having explicit bases of the Poisson cohomology spaces is very interesting, as this cohomology gives information about the underlying variety but also about the singularities of the Poisson structure and, as said before, about its formal deformations (for example, results in \cite{Pichereau1} have been used in \cite{Pichereau2}).

In this paper, we study analogous questions but in a $\zt$-graded context, where the underlying algebra becomes $\C[x_1, \dots, x_m, \theta_1, \dots, \theta_n]$, with even variables $x_i$ and odd variables $\theta_j$. The algebra is then the algebra of functions on a supermanifold of dimension $m|n$ (see for example \cite{Catt-Scha}). When $m=0$, we get the classical case (the one considered above). This is motivated by physics, where bosonic and fermionic dynamical variables play the role of even and odd variables (see for example \cite{Rogers}).

The introduction of odd variables generates the introduction of additional signs. In section \ref{section2}, we give details about this $\zt$-graded context, definitions of the $\zt$-graded Poisson structures and their cohomology. But in sections \ref{section01}, \ref{section11} and \ref{section21}, we study these notions in low dimensional  cases ($m|n=0|1, 1|1, 2|1$) and analogously to the classical context, we obtain interesting results. In particular, we notice that, even if the differences between the definitions in classical and $\zt$-graded contexts lie essentially in signs, the conditions for the existence of (nontrivial) Poisson structures, as well as the explicit bases of Poisson cohomology spaces are sometimes very different from one of these contexts to the other one.

Considering the Poisson structures and their cohomology, here is a list of some of the differences between the classical and the $\zt$-graded cases that the reader could observe in this paper (in the rest of this introduction, $x, y, z$ denote even variables, $\theta$ denotes an odd variable, and $H^k$ or $H^k(\pi)$ denote the $k$-th Poisson cohomology space associated to the ($\zt$-graded or not) Poisson structure $\pi$):\\

%
%
{\bf 1.} On the classical polynomial algebra $\C[x]$, there is no nontrivial Poisson structure, while on $\C[\theta]$, there is a one dimensional vector space of odd $\zt$-graded biderivations and every odd $\zt$-graded biderivation is a ($\zt$-graded) Poisson structure.\\
%

{\bf 2.} On the classical polynomial algebra $\C[x,y]$, every biderivation $\{\cdot, \cdot\}^\psi :=\psi\, \partial_x\wedge \partial_y$ (with $\psi\in \C[x,y]$) is a Poisson structure, while on $\C[x,\theta]$, we will see in section \ref{section11} that there is a nontrivial condition for an odd $\zt$-graded biderivation to be a ($\zt$-graded) Poisson structure. \\
%

{\bf 3.} The $\zt$-grading implies a decomposition into odd and even parts for the Poisson cohomology that doesn't exist in the classical case.\\
%

{\bf 4.} On a classical polynomial algebra $\C[x_1,\dots, x_m]$, as there is no nontrivial skew-symmetric $k$-derivation (which are the cochains of the Poisson cohomology) if $k>m$, one has $H^k=0$ when $k>m$, while on a $\zt$-graded polynomial algebra, we will see in the examples in the sections below that there are non zero $k$-th Poisson cohomology spaces $H^k$ for all $k\in툎N$.\\
%
%

%
Despite these important differences of behaviour of Poisson structures and cohomology, between  the classical and $\zt$-graded cases, some (maybe) surprising and interesting similarities are observed, especially in the explicit bases of Poisson cohomology spaces. \\

%
%
{\bf 1.} To every $\ph\in \C[x,y,z]$, one can associate a Poisson structure $\pi_\ph:= \ph_x\, \partial_y\wedge\partial_z + \ph_y\, \partial_z\wedge\partial_x+ \ph_z\, \partial_x\wedge\partial_y$, defined on $\C[x,y,z]$, with the partial derivatives $\ph_x,\ph_y,\ph_z$ of $\ph$ ;\\
On the $\zt$-graded polynomial algebra $\C[x,y,\theta]$, to every $b\in \C[x,y]$, one associates a $\zt$-graded Poisson structure $\psi_b=b_y\, \theta\px\wedge\pt - b_x\, \theta\py\wedge\pt$ (see section \ref{section21}), similarly defined with the partial derivatives of the polynomial $b$.\\

\textit{(In the sequel, for results about the Poisson cohomology of the Poisson algebra $(\C[x,y], \{\cdot,\cdot\}^\psi)$, we refer to \cite{Monnier} and \cite{RogerVanhaecke} ; for results about the Poisson cohomology of $(\C[x,y,z], \pi_\ph)$, we refer to \cite{Pichereau1}, while for results about the ($\zt$-graded) Poisson cohomology of $(\C[x,y,\theta], \psi_b)$, see section \ref{section21}.)}\\

{\bf 2.} In the classical, as well as in the $\zt$-graded case, the singularity of the Poisson structure is closely related to the associated Poisson cohomology spaces. For example,\\
$\circ$ if $\psi\in \C[x,y]$ is homogeneous with an isolated singularity (that is, $\psi$ is square-free), then the Milnor algebra $\frac{\displaystyle\C[x,y]}{\displaystyle(\psi_x,\psi_y)}$ of the singularity of $\psi$ appears in the Poisson cohomology spaces of $(\C[x,y],\{\cdot, \cdot\}^\psi)$ ;\\
$\circ$  if $\ph\in \C[x,y,z]$ is homogeneous with an isolated singularity, then the Milnor algebra $\frac{\displaystyle \C[x,y,z]}{\displaystyle(\ph_x,\ph_y,\ph_z)}$ of the singularity of $\ph$ appears in the Poisson cohomology spaces of $(\C[x,y,z],\pi_\ph)$ ;\\
$\circ$ while if $b\in \C[x,y]$ is homogeneous and square-free, then the Milnor algebra $\frac{\displaystyle\C[x,y]}{\displaystyle(b_x,b_y)}$ of the singularity of $b$ appears in the Poisson cohomology spaces of $(\C[x,y,\theta],\psi_b)$.\\
%

{\bf 3.} By definition of the Poisson cohomology, the Poisson structure is always a $2$-cocycle for its associated cohomology. In the three following examples, the condition for the ($\zt$-graded or not) Poisson structure to be a $2$-coboundary is very similar:\\
$\circ$ In $\C[x,y]$, the Poisson structure $\{\cdot,\cdot\}^\psi$ (with $\psi \in \C[x,y]$ homogeneous and square-free) is a coboundary iff $\deg(\psi)\not=2$ ;\\
$\circ$ In $\C[x,y,z]$, the Poisson structure $\pi_\ph$ (with $\ph\in \C[x,y,z]$ homogeneous with an isolated singularity) is a coboundary iff $\deg(\ph)\not=3$ ;\\
$\circ$ In $\C[x,y,\theta]$, the Poisson structure $\psi_b$ (with $b$ homogeneous and square-free) is a coboundary  iff $\deg(b)\not=2$.\\

{\bf 4.} Observing the explicit bases obtained for Poisson cohomology spaces of $(\C[x,y,z], \pi_\ph)$ and $(\C[x,y,\theta], \psi_b)$, one can see that some pieces are very similar, \\
$\circ$ for example, the part ``$\mathrm{Cas}\, \vec E$"  in $H^1(\pi_\ph)$ or $H^1_e(\psi_b)$ (where $\vec E$ is the Euler derivation $x\partial_x+y\partial_y+z\partial_z$  of $\C[x,y,z]$ or the corresponding derivation $x\px+ y\py$ of $\C[x,y,\theta]$), appearing iff $\deg(\ph)=3$ or $\deg(b)=2$.\\
$\circ$ Also, in the Poisson cohomology associated to  $(\C[x,y,z],\pi_\ph)$, as well as $(\C[x,y,\theta],\psi_b)$, the third Poisson cohomology space $H^3(\pi_\ph)$ or $H^3_e(\psi_b)$ is isomorphic to the corresponding Milnor algebra (of $\ph$ or $b$), tensored with the Casimirs.\\
$\circ$ Finally, denoting by $\{u_i\}_i$ a (homogeneous) basis of the Milnor algebra associated to $\ph$, respectively to $b$, then the elements of the form $u_j\,\pi_\ph$ and $\pi_{u_l}$, respectively $u_j\,\psi_b$ and $\psi_{u_l}$ appear in a basis of $H^2(\pi_\ph)$, respectively $H^2_o(\psi_b)$, under  similar conditions on $j$ and $l$.\\

Finally, the notion of $\zt$-graded Poisson structures extends to the notion  of $P_\infty$ structure. This is the subject of section \ref{sectionPinf}, where we introduced these structures, their cohomology and their moduli spaces, and give results in the low dimensional cases $0|1$ and $1|1$. The connection between $P_\infty$ structures and Poisson cohomology is parallel to the connections between $A_\infty$ structures and Hochshild cohomology of associative algebras, or $L_\infty$ structures and the Eilenberg-Chevalley cohomology of Lie algebras, so arises in a natural context in the study of Poisson algebras.
 
{\bf Acknowledgements.} We would like to warmly thank the referees for helpful remarks and comments.


%
%
\section{Preliminaries}
\label{section2}
Let $V=\langle x_1\mcom x_m,\theta_1\mcom \theta_n\rangle$ be
an $m|n$-dimensional \zt-graded vector space, over a (non graded) field $\k$, which
for simplicity, we will assume to be of characteristic zero,
and sometimes will assume to be algebraically closed.
For each $1\leq i \leq m$, the parity of $x_i$ is $0 \mod 2$ and denoted by $|x_i|$, while for each $1\leq j \leq n$, the parity of the variable $\theta_j$ is $1 \mod 2$ and denoted by $|\theta_j|$.
The
polynomial algebra $\A=\k[x_1\mcom x_m,\theta_1\mcom\theta_n]$ is
just the symmetric algebra $S(V)$, which is
\zt-graded commutative, that is $S(V)=T(V)/I$, where $I$ is the two-sided ideal generated by the elements of the form $a\otimes b -(-1)^{|a||b|}b\otimes a$, with $a$ and $b$ homogeneous elements of $V$.

The parity of a monomial $ax_{i_1}\cdots x_{i_k}\theta_{j_1}\cdots\theta_{j_l}$
is  $l\mod 2$. A polynomial $f$  in $\A$ is said to be homogeneous if it is a sum of
monomials of the same parity, which we call the parity of $f$ and denote by
$|f|$. If $f$ and $g$ are homogeneous elements in $\A$, then $fg$ is homogeneous
of parity $|fg|=|f|+|g|$, and $gf=\s{fg}fg$, where, $\s{fg}$ is a shorthand
notation for $\s{|f||g|}$.

Let $\partial_{x_1}\mcom \partial_{x_m},\partial_{\theta_1}\mcom\partial_{\theta_n}$ be the dual basis of
$V^*$. An element of $V^*$ extends uniquely to a \zt-graded derivation of $\A$. Moreover, we
can identify the space $\der(\A)$ of \zt-graded derivations of $\A$ with $\A\tns V^*$.

The space $\A\tns\bigwedge(V^*)$ is naturally identified with
the space $\MDer(\A)$ of multiderivations of $\A$, that is
the skew-symmetric multilinear maps $\bigwedge\A\ra\A$ which are derivations in each argument\footnote{Notice that in $\bigwedge(V^*)$, as well as in $\bigwedge(\A)$, the wedge product is $\zt$-graded skew-symmetric, for example $\bigwedge(\A)=T(\A)/J$, where $J$ is the two-sided ideal generated by the elements of the form $f\otimes g+(-1)^{|f||g|}g\otimes f$, with $f$ and $g$ homogeneous elements of $\A$.}.
In fact, if
$\alpha=f\partial_{u_1}\wedge\cdots\wedge\partial_{u_k}\in \A\tns\bigwedge^k(V^*)$,
where $f\in\A$ and $u_i\in V^*$, then if $g_1\wedge\cdots \wedge g_k\in\bigwedge^k(\A)$ (with $g_1, \dots, g_k$ homogeneous elements of $\A$), we define
\begin{equation*}
\alpha(g_1\wedge\cdots\wedge g_k)=f\sum_{\sigma\in S_k}\s{\sigma}\epsilon(\sigma)\s{g\cdot\sigma}
\partial_{u_1}(g_{\sigma(1)})\wedge\cdots\wedge\partial_{u_k}(g_{\sigma(k)}).
\end{equation*}
Here $\s{\sigma}$ is the signature of the permutation $\sigma$,
$\epsilon(\sigma, g)$ is a sign depending on $\sigma$ and $g:=(g_1\mcom g_k)$, which is determined by the following equality in $\bigwedge\A$
\begin{equation*}
\epsilon(\sigma,g)g_{\sigma(1)}\wedge\cdots\wedge g_{\sigma(k)}=g_1\wedge\cdots\wedge g_k,
\end{equation*}
and $\s{g\cdot\sigma}$ is the sign given by
\begin{equation}
g\cdot\sigma=u_2g_{\sigma(1)}+u_3(g_{\sigma(1)}+g_{\sigma(2)})
\mplus
u_k(g_{\sigma(1)}\mplus g_{\sigma(k-1)}),
\end{equation}
which results from the Koszul's sign appearing when applying the tensor
$\partial_{u_1}\mtns\partial_{u_k}$ to $g_{\sigma(1)}\mtns g_{\sigma(k)}$.

The bidegree of $\alpha$ is $(\deg(\alpha),|\alpha|)$ where
$\deg(\alpha)=k-1$ is the exterior degree of $\alpha$, that is, its
degree in terms of the \Z-grading of $\bigwedge(V^*)$, and
$|\alpha|=|f|+|u_1|\mplus |u_k|$ is the parity of $\alpha$ as a
\zt-graded map.
%

Because the space $\MDer(\A)$ is  identified with  $\A\tns\bigwedge(V^*)$ and  $\der(\A)$ with $\A\tns V^*$, every $\zt$-graded $k$-derivation of $\A$ can be seen as an element of
$\bigwedge^k(\Der(\A))$.
 Denote $C^k=\bigwedge^k(\Der(\A))$,
so that $\MDer(\A)=\prod_{k=0}^\infty C^k$. In order to simplify the notations, an element $\delta_1\wedge\cdots\wedge\delta_k\in C^k$, where $\delta_1, \dots, \delta_k\in \Der(\A)$ will often be denoted by $\delta_1\cdots\delta_k$ (where the wedge product has been omited).

If  $\delta=\delta_1\cdots\delta_k\in C^k$ and
$\mu=\mu_1\cdots\mu_l\in C^l$, then
\begin{align*}
[\delta,\mu]=
&\sum_{i=1}^k\sum_{j=1}^l
\s{i+j+\star}
[\delta_i,\mu_j]
\delta_1\cdots\hat{\delta_i}\cdots\delta_k\mu_1\cdots\hat{\mu_j}\cdots\mu_l,
\end{align*}
where
\begin{equation*}
\star=\delta_i(\delta_1\mplus\delta_{i-1})+\mu_j(\delta_1\mplus\hat{\delta_i}\mplus
\delta_k+\mu_1\mplus\mu_{j-1}),
\end{equation*}
and
\begin{equation*}
[\delta_i,\mu_j]=\delta_i\circ\mu_j-\s{\delta_i\mu_j}\mu_j\circ\delta_i
\end{equation*}
is the usual bracket of $\delta_i$ and $\mu_j$ as derivations of $\A$,
which coincides with their bracket as coderivations of $\bigwedge(\A)$.
In particular,
\begin{equation*}
[f\del_u,g\del_v]=f\del_u(g)\del_v+\s{(u+f)(v+g)+1}g\del_v(f)\del_u.
\end{equation*}
As a consequence, we obtain that
\begin{align*}
[f\del_{u_1}\cdits\del_{u_m},g\del_{v_1}\cdits\del_{v_n}]=&
\sum_{i=1}^m\s{m-i+\star}
f\del_{u_i}(g)\del_{u_1}\cdits\widehat{\del_{u_i}}\cdits\del_{u_m}\del_{v_1}\cdits\del_{v_n}\\
&+\sum_{j=1}^n\s{j+\star\star}g\del_{v_j}(f)\del_{u_1}\cdits\del_{u_m}\del_{v_1}\cdits
\widehat{\del_{v_j}}\cdits\del_{v_n}
\end{align*}
where
\begin{align*}
\star&=u_i(u_1\mplus u_{i-1})+g(u_1\mplus\widehat{u_i}\mplus u_m)\\
\star\star&=(f+u_1\mplus u_m)(v_j+g)+v_j(v_1\mplus v_{j-1}).
\end{align*}

The bracket above is known as the Schouten bracket (see for example \cite{Azca}), and it
equips the \zt-graded multiderivations of $\A$
with the structure of a $\Z\times\zt$-graded
Lie algebra. This type of grading is unsatisfactory from the point of view
of deformation theory, and there is a standard method to convert it to
a \zt-grading.  To do this, we introduce the \emph{modified Schouten bracket}, given
by
\begin{equation*}
\{\alpha,\beta\}=\s{deg(\alpha)|\beta|}[\alpha,\beta]
\end{equation*}
if $\alpha$ and $\beta$ are homogeneous in bidegree. This bracket equips $C=\MDer(\A)$ with the
structure of a \zt-graded Lie algebra, rather than bigraded algebra, where the \zt-graded
degree of $\alpha$ becomes $\deg(\alpha)+|\alpha|\pmod 2$. About this modified bracket, see \cite{Stasheff}, \cite{Penkava1} and \cite{Penkava2}.
 Denote by $C_e$ and $C_o$ the
subspaces of $C$ consisting of even and odd multiderivations, respectively, and similarly
for $C^k_e$ and $C^k_o$.

A \emph{($\zt$-graded) Poisson algebra structure} $\psi$ on $\A$ is an odd biderivation $\psi\in C^2_o$ satisfying $[\psi,\psi]=0$.
In fact, if we define a bracket on $\A$ by $[a,b]=\psi(a,b)$, then $\psi$ is a
Poisson structure on $\A$ precisely when the following conditions occur:
\begin{align*}
[a,[b,c]]&=[[a,b],c]+\s{ab}[b,[a,c]],&\quad\text{The graded Jacobi Identity}\\
[a,bc]&=[a,b]c+\s{ab}b[a,c],&\quad\text{The graded Leibniz Identity}
\end{align*}
The formulas above make sense when $\A$ is any graded algebra. In this paper, we
will be studying cases where $\A$ is a \zt-graded commutative algebra, but it is interesting
to note that the definition of a Poisson algebra given above does not depend on
the \zt-graded commutativity of the associative algebra structure on \A.

Suppose that $\psi$ is a Poisson structure on $\A$ (which implies $\deg( \psi)=1$, $|\psi|=0$). We define the operator $D:C\to C$ by $D(\alpha)=\lbrace \psi, \alpha\rbrace$ for all $\alpha\in C$. The graded Jacobi identity for $\lbrace \cdot, \cdot \rbrace$ and $\lbrace \psi, \psi\rbrace=0$ imply $D\circ D=0$, so that we can define the Poisson cohomology $H(\psi)$ associated to $\psi$ by $H(\psi) = \ker(D)/\im(D) = \bigoplus\limits_{n\geq 0} H^n(\psi)$, where
$$
H^n(\psi) = \ker(D:C^n\ra C^{n+1})/\im(D:C^{n-1}\ra C^n).
$$
The cohomology inherits a natural grading, so we have a decomposition $H(\psi)=H_o(\psi)\oplus H_e(\psi)$, because for homogeneous (in the bigraded sense) $\alpha\in C$, one has $\deg(D(\alpha))=\deg(\alpha)+1$ and $|D(\alpha)| = |\alpha|$.

\section{Poisson Structures on a $0|1$-dimensional polynomial algebra}
\label{section01}
Poisson structures on a $1|0$-dimensional algebra $\k[x]$ are all trivial, so not
interesting. The same is not true for Poisson structures on a $0|1$-dimensional algebra,
which arises as the algebra of a supermanifold over a singleton point.

Let $\A=\k[\theta]=\k\oplus\k\theta$ be the $0|1$-dimensional polynomial algebra.
Let
\begin{align*}
\ph^n&=\theta\del_\theta^n\\
\psi^n&=\del_\theta^n
\end{align*}
Then $C^n_o=\langle\psi^n\rangle$ and $C^n_e=\langle\ph^n\rangle$ are the subspaces of $C^n$
of odd and even elements respectively.
Let now $\psi\in C^2_o$, that is $\psi \in \k\,\del_\theta\wedge\del_\theta$. As $\lbrack \psi, \psi\rbrack=0$, every odd biderivation of $\A$ is a Poisson structure on $\A$.
It is straightforward to show that
$$
[\psi,\psi^n]=0,\qquad
[\psi,\ph^n]=2\psi^{n+1}.
$$
So that
$$
\{\psi,\psi^n\}=0,\qquad
\{\psi,\ph^n\}=(-1)^{(n+1)}2\psi^{n+1}.
$$
It is also easy to see that
\begin{align*}
H^n(\psi)=
\begin{cases}
\k,&n=0\\
0,&n\ge 1.
\end{cases}
\end{align*}
From this, we easily deduce that up to isomorphism, there is a unique nontrivial Poisson algebra structure on a $0|1$-dimensional space, given by 
$\psi=\del_\theta\wedge\del_\theta$.

\section{Poisson Structures on a $1|1$-dimensional polynomial algebra}
\label{section11}
We consider Poisson structures on $\A=\k[x,\theta]$, the $1|1$-dimensional polynomial algebra.
Let
\begin{align*}
\psi^k=\psi^k(f_k,g_k)&=f_k(x)\theta\px\pt^{k-1}+g_k(x)\pt^k,\\
\ph^k=\ph^k(a_k,b_k)&=a_k(x)\px\pt^{k-1}+b_k(x)\theta\pt^{k}.
\end{align*}
Then
\begin{align*}
C^k_o&=\lbrace\psi^k(f_k,g_k)\mid f_k,g_k\in\k[x]\rbrace,\\
C^k_e&=\lbrace\ph^k(a_k,b_k)\mid a_k,b_k\in\k[x]\rbrace.
\end{align*}

One can check that for $\psi = f(x)\theta\px\wedge\pt+g(x)\pt\wedge\pt\in C^2_o$ (where $f,g\in \k[x]$),
$$
\{\psi, \psi\} =-4fg\px\pt^2-2fg'\theta\pt^3,
$$
so that $\psi$ is a Poisson structure on $\A$ iff $f=0$ or $g=0$.
We shall call a Poisson structure of the form $g(x)\pt\wedge\pt$ a Poisson structure \emph{of the first kind}, and one of the form $f(x)\theta\px\wedge\pt$ a Poisson structure \emph{of the second kind}.\\

\subsection{Poisson structures of the first kind}
\label{section11first}
Let
$
\psi=g(x)\pt\wedge \pt
$
be a nontrivial Poisson structure on $\A=\k[x,\theta]$ of the first kind.
Let us compute the action of the coboundary operator $D$,
given by $D(\phi)=\{\psi,\phi\}$ on cochains. We can
use the $\ph_l$ as a basis of the even cochains, but need to
introduce a notation for an odd cochain $\alpha^l$,
given by
$
\alpha^l(s_l,t_l)=s_l(x)\theta\px\pt^{l-1}+t_l(x)\pt^l
$.
We compute
\begin{align*}
[\psi,\alpha^l]=-2s_lg\px\pt^{l}-s_lg'\theta\pt^{l+1}.
\end{align*}
It follows that an odd cochain $\alpha=c^l\alpha_l$ is a $D$-cocycle
 precisely when $s_l=0$ for all $l$. Moreover, we have
\begin{equation*}
[\psi,\ph^l]=(2gb_l-a_lg')\pt^{l+1},
\end{equation*}
so if $\ph=\ph^l(a_l,b_l)$ is an even cochain,
then the condition for $\ph$ to be a cocycle is
\begin{equation*}
2gb_{n-1}-a_{n-1}g'=0\qquad n=0\dots
\end{equation*}
Note that $\ph^0=b_0(x)\theta$; \ie, $a_0=0$.
As $g\not=0$, the equation for $n=1$ gives $b_0=0$.
Also, $\alpha^0=t_0(x)$, so that automatically, $\alpha^0$ is always
a cocycle.
Accordingly, we have $H^0(\psi)=\k[x]$.

In order to compute $H^n(\psi)$ for $n> 0$, let $h(x)=\gcd(g(x),g'(x))$. Then $h$ measures the
\emph{singularity} of the Poisson structure $\psi$. We first compute
$H^n_o$, the odd part of the cohomology. Now the odd cocycles $Z^n_o$ and the
odd coboundaries $B^n_o$ are given by
\begin{align*}
Z^n_o&=\k[x]\pt^n\\
B^n_o&=
\begin{cases}
0  &n=0,\\
 2g(x)\k[x]\pt&n=1,\\
(2g(x)\k[x]+g'(x)\k[x])\pt^n&n\ge 2.
\end{cases}
\end{align*}
It follows that
\begin{equation*}
B^n_o=
h(x)\k[x]\pt^n,\quad \hbox{if } n\ge 2,
\end{equation*}
so we have
\begin{equation*}
H^n_o(\psi)=
\begin{cases}
\k[x]& n=0,\\
\vspace*{-0.3cm}
{}\\
\displaystyle{\frac{\k[x]}{(g(x))}}\, \pt &n=1,\\
\vspace*{-0.3cm}
{}\\
\displaystyle{\frac{\k[x]}{(h(x))}}\, \pt^{n} &n\ge 2,
\end{cases}
\end{equation*}
where for $a(x)\in \k[x]$, $(a(x))$ has to be understood as $\{a(x)b(x)\mid b(x)\in \k[x]\}$. Notice that $H^n(\psi)$ is a vector space,  in general not a ring, but in some good cases, it will be given by the quotient with the set of all multiples of an element $a(x)$, denoted by $(a(x))$.

Recall that for deformation theory, we normally do not include the odd 0-cochains, as a consequence,
it is natural to interpret $H^{1}_o=Z^{1}_o$. From this point of view, we
obtain $H^{1}_o=\k[x]\pt$.

To calculate the even part of the cohomology, suppose
$$\ph^n=a(x)\px\pt^{n-1}+b(x)\theta\pt^n$$ is an $n$-cocycle. The cocycle
condition for $\ph^n$ is $2b(x)g(x)=a(x)g'(x)$. Express $g(x)=p(x)h(x)$ and
$g'(x)=q(x)h(x)$, so $p(x)$ and $q(x)$ are relatively prime.  When $n>0$, the
cocycle condition reduces to
\begin{align*}
a(x)&=2p(x)m(x)\\
b(x)&=q(x)m(x),
\end{align*}
for an arbitrary $m(x)\in\k[x]$.  Now,
if $$\alpha_{n-1}=s(x)\theta\px\pt^{n-2}+t(x)\pt^{n-1},$$ then
\begin{equation*}
D(\alpha_{n-1})=-2s(x)g(x)\px\pt^{n-1}-s(x)g'(x)\theta\pt^n.
\end{equation*}
If we express $m(x)=u(x)h(x)+r(x)$ where $\deg(r(x))<\deg(h(x))$, then
\begin{align*}
a(x)&=2g(x)u(x)+2p(x)r(x)\\
b(x)&=g'(x)u(x)+q(x)r(x)
\end{align*}
gives a decomposition of $a(x)$ and $b(x)$
into terms coming from trivial and nontrivial cocycles.
When $n=1$, there are no $n$-coboundaries.
When $n=0$, we must have $a(x)=0$, so the cocycle condition
becomes $b(x)=0$. Thus there are no even 0-cocycles. Thus we have
\begin{align*}
H^n_e(\psi)=
\begin{cases}
0&n=0,\\
\k[x](2p(x)\px+q(x)\theta\pt)&n=1,\\
\vspace*{-0.3cm}
{}\\
\displaystyle{\frac{\k[x]}{(h(x))}}(2p(x)\,\px\pt^{n-1}+q(x)\theta\,\pt^n)&n\ge 2.
\end{cases}
\end{align*}

\subsection{Poisson structures of the second kind}
\label{section11second}
Let us suppose that
$
\psi=f(x)\theta\px\wedge\pt
$
is a nontrivial Poisson structure of the second kind.
If $\alpha^l(s_l,t_l)=s_l(x)\theta\px\pt^{l-1}+t_l(x)\pt^l
$, then
\begin{align*}
[\psi,\alpha^l]=-lft_l\px\pt^{l}-ft'_l\theta\pt^{l+1}.
\end{align*}
An odd cochain $\alpha=c^l\alpha_l$ is a $D$-cocycle
 precisely when $t_l=0$ for all $l>0$, and $t_0$ is a constant.
As a consequence, we have $H^0_o=\k$.
Moreover, we have
\begin{equation*}
[\psi,\ph^l]=(fa'_l-f'a_l+(1-l)fb_l)\theta\px\pt^{l},
\end{equation*}
so if $\ph=\ph^l(a_l,b_l)$ is an even cochain,
then the condition for $\ph$ to be a cocycle is
\begin{equation*}
fa'_{n-1}-f'a_{n-1}+(2-n)fb_{n-1}=0\qquad n=0\dots
\end{equation*}
As $f\not=0$ and $a_0=0$, applying the equation above with $n=1$, we obtain $b_0=0$ for any even cocycle $\ph^l$. Thus, $H^0_e=0$.

In order to determine $H^n(\psi)$ for $n\geq 1$, let $h(x)=\gcd(f(x),f'(x))$. Then as in
the case of Poisson structures of the first kind, we say that $h$ measures the
\emph{singularity} of the Poisson structure $\psi$.

We first compute
$H^n_o$, the odd part of the cohomology. If $n\ge1$, the odd cocycles $Z^n_o$ and the
odd coboundaries $B^n_o$ are given by
\begin{align*}
Z^n_o&=\k[x]\,\theta\px\pt^{n-1}\\
B^n_o&=
\begin{cases}
0&n=0,\\
f\k[x]\theta\px&n=1,\\
\{(fa'-f'a)\theta\px\pt\mid a\in \k[x]\}&n=2,\\
\{ (fa'-f'a+(2-n)fb)\theta\px\pt\mid a, b\in \k[x]\}&
\text{otherwise}.
\end{cases}
\end{align*}
It follows that
\begin{equation*}
B^n_o=
h(x)\k[x]\theta\px\pt^{n-1},\quad n\ge 3,
\end{equation*}
so we have
\begin{equation*}
H^n_o(\psi)=
\begin{cases}
\k & n=0,\\
\vspace*{-0.3cm}
{}\\
\displaystyle{\frac{\k[x]}{(f(x))}}\,\theta\px&n=1,\\
\vspace*{-0.3cm}
{}\\
\displaystyle{\frac{\k[x]}{\{fa'-af'\mid a\in \k[x]\}}}\, \theta\px\pt\; &n=2,\\
\vspace*{-0.3cm}
{}\\
\displaystyle{\frac{\k[x]}{(h(x))}}\,\theta\px\pt^{n-1}&n\ge 3.
\end{cases}
\end{equation*}
As in the case of Poisson structures of the first kind, we should omit
the coboundaries of 0-cochains, so for deformation purposes we have
$H^{1}_o(\psi)=\k[x]\theta\px$.

Notice that, the space $H^2_o(\psi)$ is very different for a Poisson structure of the second kind $\psi$, than for a Poisson structure of the first kind. In the case of a Poisson structure $\psi$ of the second kind, $B^{2(k-1)}_o(\psi)$ is indeed not given by the multiples of one polynomial anymore (unless $f=ax^m$ is a single term polynomial).

To calculate the even part of the cohomology, suppose
$$\ph^n=a(x)\px\pt^{n-1}+b(x)\theta\pt^n$$ is an $n$-cocycle. The cocycle
condition for $\ph^n$ is $f(x)(a'(x)+(1-n)b(x)=f'(x)a(x)$. Express $f(x)=p(x)h(x)$ and
$f'(x)=q(x)h(x)$, so $p(x)$ and $q(x)$ are relatively prime.  When $n\ne 1$, the
cocycle condition reduces to
\begin{align*}
a(x)&=m(x)p(x)\\
a'(x)+(1-n)b(x)&=m(x)q(x),
\end{align*}
for an arbitrary $m(x)\in\k[x]$.
Now,
if $$\alpha_{n-1}=s(x)\theta\px\pt^{n-2}+t(x)\pt^{n-1},$$ then
\begin{equation*}
D(\alpha_{n-1})=-(n-1)f(x)t(x)\px\pt^{n-1}-f(x)t'(x)\theta\pt^n.
\end{equation*}
If $n\ne 1$, then
we can express $m(x)=(n-1)u(x)h(x)+r(x)$ where $\deg(r(x))<\deg(h(x))$.
\begin{align*}
a(x)&=(n-1)f(x)u(x)+p(x)r(x)\\
a'(x)+(1-n)b(x)&=(n-1)f'(x)u(x)+q(x)r(x)
\end{align*}
gives a natural decomposition of $a(x)$ into a part
coming from a trivial and a nontrivial cocycle. Now we express
\begin{equation*}
b(x)=f(x)u'(x)+\tfrac1{1-n}\left(r(x)q(x)-r'(x)p(x)-r(x)p'(x)\right)
\end{equation*}
which gives the corresponding decomposition of $b(x)$ into trivial and nontrivial parts.
Unlike the case for Poisson structures of the first kind, we cannot express the cohomology
in terms of products of a single generator. However, there is an isomorphism
$H^n_e(\psi)$ with $\k[x]/(h(x))$ because the nontrivial parts of the decomposition above
are determined by $r(x)$.

Let us finally calculate $ H^{n}_e(\psi)$, in the particular case where $n=1$. First, suppose that $\varphi^{1}=a(x)\px+b(x)\theta\pt$ satifies $D(\ph^1)=0$ which is equivalent to the condition: $f(x)a'(x)-a(x)f'(x)=0$. We suppose $a\not=0$ so that this condition implies that $a$ and $f$ have same degree. Moreover, as $f(x)=p(x)h(x)$ and $f'(x)=q(x)h(x)$, we get
$$
p(x)a'(x)-a(x)q(x) = 0.
$$
Because $p$ and $q$ are coprime, there exists $m(x)\in \k[x]$ such that $a(x)=m(x)p(x)$. We decompose $m(x)= u(x)h(x)+r(x)$, with $u(x), r(x)\in \k[x]$ and $\deg(r(x))<\deg(h(x))$, and get $a(x) = u(x)f(x)+r(x)p(x)$. But $\deg(p(x)r(x))<\deg(f(x))=\deg(a(x))$ so that $u(x)\not=0$ and $\deg(u(x)f(x))=\deg(a(x))=\deg(f(x))$, which implies that $u(x)\in \k$ is a constant. Then,
\begin{eqnarray*}
0&=& f(x)a'(x)-a(x)f'(x)\\
 &=& f(x)p'(x)r(x)+f(x)p'(x)r(x)-p(x)r(x)f'(x),
\end{eqnarray*}
which gives $h(x)r'(x)-r(x)h'(x)=0$. Because $\deg(u(x))<\deg(h(x))$, this implies $r(x)=0$ and $a(x)\in \k f(x)$. We have obtained that
$$
Z^{1}_e = \k f \px+\k[x]\theta\pt.
$$
Moreover, for $t_0\in C^0_o=\k[x]$, we have $D(t_0)=\pm f(x)t_0'\theta\pt$, so that $B^{1}_e=\k[x]f\theta\pt$. Finally,
$$
H^{1}_e(\psi) = \k f \px + \k[x]/(f(x))\, \pt,
$$
and $H^n_e(\psi)=$
\begin{equation*}
\begin{cases}
\k f \px + \displaystyle{\frac{\k[x]}{(f(x))}}\, \theta\pt,&n=1,\\
\vspace*{-0.3cm}
{}\\
\left\lbrace rp\px\pt^{n-1} + \displaystyle{\frac{1}{1-n}}(rq-r'p-rp')\theta\pt^n\mid r\in \k[x],\right. \;&\hbox{otherwise.}\\
\qquad\qquad\qquad\qquad\qquad\quad\left.\phantom{\displaystyle{\frac{1}{n-1}}} \deg(r(x))<\deg(h(x))\right\rbrace&
\end{cases}
\end{equation*}
%


%
\section{Poisson structures on a $2|1$-dimensional polynomial algebra}
\label{section21}
Let $\A=\k[x,y,\theta]$, the $2|1$-dimensional polynomial algebra. In this section, we explain what are the conditions for an odd biderivation of $\A$ to be a Poisson structure. We then give explicit families of Poisson structures that satisfy the property of admitting a nontrivial even or odd Casimir. We then finally completly determine the Poisson cohomology associated to one of these families of Poisson structures.
To simplify the notation, we will often denote $\k[x,y]$ by $\A'$.

\begin{rem}

Let us first consider the de Rham complex, associated to the algebra
$\A'=\k[x,y]$. We denote by
$$
\Omega^1(\A')=\lbrace f(x,y)\diff x +g(x,y)\diff y \mid (f, g)\in \A'^2\rbrace
$$
 and by
 $$
 \Omega^2(\A')=\lbrace f(x,y)\diff x\wedge\diff y\mid f\in \A' \rbrace
 $$
  the spaces of $1$ and $2$-(K\"ahler) forms of the algebra $\A'$.
For $(f, g)\in \A'^2$, we have the de Rham differential $\diff$ defined by :
\begin{align*}
\diff f &=f_x\diff x+ f_y\diff y,\\
\diff(f\diff x +g\diff y)&=\diff(f)\wedge\diff x +\diff(g)\wedge\diff y=(g_x-f_y)\diff x\wedge \diff y,\\
\diff(f\diff x\wedge\diff y)&=0,
\end{align*}
where $f_x$ and $f_y$ denote the partial derivatives of $f\in \A'$, with respect to $x$ and $y$.

We identify an element of $\Omega^1(\A')$ with an
element of $\A'^2$ and an element of $\Omega^2(\A')$ with an
element of $\A'$, by the following maps:
$$
\begin{array}{ccc}
\Omega^1(\A') &\ra& \A'^2\\
        f\diff x +g\diff y &\mapsto& \left(f,g\right)
\end{array}\quad
\hbox{ and }\quad
\begin{array}{ccc}
\Omega^2(\A') &\ra& \A'\\
        f\diff x\wedge\diff y &\mapsto& f.
\end{array}
$$
Then, using these identifications, we can write the de Rham complex as
follows:
\begin{equation}\label{eq:dR}
\begin{diagram}
\node{\A'=\k[x,y]}\arrow{e,t}{\vn}
\node{\A'^2}\arrow{e,t}{\Div} \node{\A'}
\end{diagram}
\end{equation}
where the gradient and divergence operators, $\vn$ and $\Div$, are defined as follows:
$$
\begin{array}{ccc}
\vn :\A' &\to& \A'^2\\
        f &\mapsto& \left(\begin{smallmatrix}\displaystyle f_x\\ \displaystyle f_y\end{smallmatrix}\right),
\end{array}
\qquad
\begin{array}{ccc}
\Div :\A'^2 &\to& \A'\\
        \left(\begin{smallmatrix}f\\ g\end{smallmatrix}\right) &\mapsto& g_x- f_y.
\end{array}
$$
Notice that for every $f\in \A'$, we have the following identity :
$$
\Div(\vn(f))=0.
$$
Moreover, the de Rham complex (\ref{eq:dR}) is exact. Indeed, if $f\in \A'$ satisfies $\vn f=\left(\begin{smallmatrix}0\\ 0\end{smallmatrix}\right)$, then of course, $f\in \k$. Next, let us assume that $\left(\begin{smallmatrix}f\\ g\end{smallmatrix}\right)\in \A'^2$ satisfies $\Div\left(\left(\begin{smallmatrix}f\\ g\end{smallmatrix}\right)\right)=0$, this means that $g_x=f_y$. It suffices to show the result for $f$ and $g$, two homogeneous polynomials of the same degree $n\in \N$. Let $h=\frac{1}{n+1}(xf+yg)$. We then have
$$
\vn h =\frac{1}{n+1} \left(\begin{smallmatrix}\displaystyle f + xf_x+yg_x\\ \displaystyle xf_y+yg_y+g\end{smallmatrix}\right)
=\frac{1}{n+1} \left(\begin{smallmatrix}\displaystyle f + xf_x+yf_y\\ \displaystyle xg_x+yg_y+g\end{smallmatrix}\right)
=\left(\begin{smallmatrix}f\\ g\end{smallmatrix}\right),
$$
where we have used Euler's formula $xf_x+yf_y=nf$. This proves that the complex (\ref{eq:dR}) is exact.
\end{rem}
In the following, we will also use the cross product $\times : \A'^2\to \A'$ given, for  $\left(\begin{smallmatrix}\displaystyle f\\ \displaystyle g\end{smallmatrix}\right), \left(\begin{smallmatrix} \displaystyle  h\\ \displaystyle k\end{smallmatrix}\right)\in \A'^2$, by:
$$
\left(\begin{smallmatrix} \displaystyle  f\\ \displaystyle g\end{smallmatrix}\right)\times\left(\begin{smallmatrix} \displaystyle  h\\ \displaystyle k\end{smallmatrix}\right)
=fk-gh.
$$

\smallskip
An odd biderivation $\psi$ must be of the form
\begin{equation*}
\psi=f(x,y)\px\py+g(x,y)\theta\px\pt+h(x,y)\theta\py\pt+k(x,y)\pt^2.
\end{equation*}
We have
\begin{align*}
\tfrac12[\psi,\psi]=&
-(-fg_x+f_xg-fh_y+f_yh)\theta\px\py\pt+
(fk_y-2kg)\px\pt^2
\\&-(fk_x+2hk)\py\pt^2
-(gk_x+hk_y)\theta\pt^3
\end{align*}

The codifferential condition $[\psi,\psi]=0$ is equivalent to the
three conditions
$$
\renewcommand{\arraystretch}{2}
\left\lbrace\begin{array}{l}
f\, \vn k + 2k \left( \begin{smallmatrix}  h\\  -g \end{smallmatrix}\right) = 0,\\
\left( \begin{smallmatrix}  h\\  -g \end{smallmatrix} \right) \times \vn k =0,\\
-\left( \begin{smallmatrix} h\\  -g \end{smallmatrix}\right) \times\vn f - f\,\Div \left( \begin{smallmatrix}  h\\  -g \end{smallmatrix}\right) =0.
\end{array}\right.
$$
It is here easy to see that the second condition follows from the first one (by applying $\times\vn k$ and because $\vn k\times\vn k =0$).

By studying the previous equations, we are able to give a list of different families of Poisson structures that admit Casimirs.
A \emph{Casimir of $\psi$} is a cocycle in $C^0$, in other words, an $\alpha$ element of $\A$
such that $\psi(\alpha,\beta)=0$ for all $\beta\in\A$.
Let us consider the conditions for an even element $\alpha=a(x,y)\theta$
to be a Casimir for the Poisson structure.
\begin{align*}
[\psi,\alpha]=(-fa_y-ga)\theta\px+(fa_x-ah)\theta\py-2ka\pt.
\end{align*}
 It follows that there are nonzero even Casimirs only when $k(x,y)=0$.
Also, a special case of a Poisson structure which has a nontrivial even Casimir $a(x,y)\theta$ is given by the following:
\begin{align*}
\psi &= a(x,y)\px\py-a_y(x,y)\theta\px\pt+a_x(x,y)\theta\py\pt, \hbox{ with } a\in \k[x,y],\\
&(\ie\;\; f=a,\quad  k=0,\quad g=-a_y,\quad h=a_x, \quad a(x,y)\in \k[x,y]).
\end{align*}

On the other hand, suppose that $\beta=b(x,y)$ is an odd element of $C^0$.  Then
\begin{align*}
[\psi,\beta]= -(fb_y\px-fb_x\py-(gb_x+hb_y)\theta\pt).
\end{align*}
Every even constant function is an odd Casimir.
For a nonconstant Casimir $\beta$, it follows that $f(x,y)=0$.
A special case of a Poisson structure with a nonconstant odd Casimir $b(x,y)$ is given by the following:
\begin{align*}
\psi &= b_y(x,y)\theta\px\pt-b_x(x,y)\theta\py\pt, \hbox{ with } b\in \k[x,y],\\
&(\ie\;\; f=k=0,\quad g=b_y,\quad h=-b_x, \quad b(x,y)\in \k[x,y]).
\end{align*}
Another case where there are nontrivial odd Casimirs is given by the following.
\begin{align*}
\psi &= k(x,y)\pt^2, \hbox{ with } k\in \k[x,y],\\
&(\ie\;\; f=g=h=0,\quad  k(x,y)\in\k[x,y]).
\end{align*}
For any such Poisson structure, every function $b(x,y)\in\k[x,y]$ is an odd Casimir.

Another interesting family of Poisson structures is given by the following.
\begin{align*}
\psi &= -2k(x,y)\px\py-k_y(x,y)\theta\px\pt+k_x(x,y)\theta\py\pt+k(x,y)\pt^2,\\
&\hbox{ with } k\in \k[x,y],\\
&(\ie\;\; f =-2k,\qquad g=-k_y,\qquad h=k_x,\qquad k(x,y)\in\k[x,y]).
\end{align*}
When $k\ne0$, the only Casimirs for this type of
Poisson structure are the constant functions $\beta=c\in \k$.

In order to write the Poisson cohomology complex associated to a Poisson structure
$$
\psi =f\px\py + g\theta\px\pt + h\theta\py\pt + k \pt^2,
$$
in terms of the operators $\times$, $\Div$ and $\vn$,
we identify the cochains to elements of the spaces $\A'=\k[x,y]$, $\A'\times \A'^2$ or $\A'\times \A'\times \A'^2$, as follows.
First, the space of odd $0$-cochains $C^0_o = \lbrace c(x,y)\in \A'\rbrace$ is equal to $\A'$, while the space of even $0$-cochains $C^0_e=\lbrace a(x,y)\theta\mid a\in \A'\rbrace$ can be identified to $\A'$, by the following map:
$$
\begin{array}{ccc}
C^0_e &\to &\A'\\
a(x,y)\theta &\mapsto &a(x,y).
\end{array}
$$
Next, we consider the space of odd $1$-cochains $C^1_o=\lbrace p\theta\px + q\theta\py+r\pt\mid (p,q,r)\in \A'^3\rbrace$ and the space of even $1$-cochains $C^1_e=\lbrace a\px+ b\py+ c\theta\pt\mid (a,b,c)\in \A'^3\rbrace$, which will be identified with the space $\A'\times \A'^2$ by the following maps:
$$
\begin{array}{ccc}
C^1_o &\to &\A'\times \A'^2\\
p\theta\px + q\theta\py+r\pt  &\mapsto & \left(r, \left(\begin{smallmatrix}q\\ -p\end{smallmatrix}\right)\right),\\
{}\\
C^1_e &\to &\A'\times \A'^2\\
a\px+ b\py+ c\theta\pt  &\mapsto & \left(c, \left(\begin{smallmatrix}b\\ -a\end{smallmatrix}\right)\right).
\end{array}
$$
Finally, for every $n\geq 2$, the space of odd $n$-cochains $C^n_o=\lbrace p\px\py\pt^{n-2}+ q\theta\px\pt^{n-1}+ r\theta \py\pt^{n-1} + s\pt^n\mid (p, q, r, s)\in \A'^4\rbrace$ and the space of even $n$-cochains $C^n_e=\lbrace a\theta\px\py\pt^{n-2}+ b\px\pt^{n-1}+ c \py\pt^{n-1} + d \theta\pt^n\mid (a,b,c,d)\in \A'^4\rbrace$ will be identified with the space $\A'\times\A'\times \A'^2$, by the following maps:
$$
\begin{array}{ccc}
C^n_o &\to &\A'\times \A'\times \A'^2\\
p\px\py\pt^{n-2}+ q\theta\px\pt^{n-1}+ r\theta \py\pt^{n-1} + s\pt^n  &\mapsto & \left(p, s,  \left(\begin{smallmatrix}r\\ -q\end{smallmatrix}\right)\right),\\
{}\\
C^n_e &\to &\A'\times\A'\times \A'^2\\
a\theta\px\py\pt^{n-2}+ b\px\pt^{n-1}+ c \py\pt^{n-1} + d \theta\pt^n  &\mapsto & \left(a, d, \left(\begin{smallmatrix}c\\ -b\end{smallmatrix}\right)\right).
\end{array}
$$
Also in the following an element $ \left(\begin{smallmatrix}f\\ g\end{smallmatrix}\right)$ of $\A'^2$ will often be denoted by a capital letter with an arrow: $\vec F := \left(\begin{smallmatrix}f\\ g\end{smallmatrix}\right)$. The element $\left(\begin{smallmatrix}0\\ 0\end{smallmatrix}\right)$ in $\A'^2$ will also be denoted by $\vec 0$.

We now want to determine the (odd and even) Poisson cohomology of a Poisson structure on $\A$, of the form
$$
\psi_b := b_y\theta\px\pt - b_x\theta\py\pt,
$$
where $b\in \A'$ is a polynomial.

Let us first point out that in this case, the \emph{singular locus} of the codifferential $\psi_b$ is defined as being the
affine variety
$$
\lbrace b_x=b_y=0\rbrace\subseteq \k^2,
$$
 and because $b$ is supposed to be homogeneous, this singular locus coincide with the singularities of the surface
 $$
 \mathcal F_b := \lbrace (x, y)\in \k^2\mid b(x, y)=0\rbrace\subseteq \k^2.
 $$

From now, we denote by $D_{\psi_b}$ the Poisson coboundary operator $D_{\psi_b}:=\lbrack\psi_b, \, \cdot\rbrack$ and we rather write cochains as elements in $\A'=\k[x,y]$, $\A'\times \A'^2$ or $\A'\times \A'\times \A'^2$, as explained above.
Let us write the values taken by this operator, under these identifications. \\
For $\alpha_0 = c(x,y)\in C^0_o(\A)=\A'$:
$$
D_{\psi_b}(\alpha_0)=-\left(\vn b\times\vn c,\; \vec 0\right)\; \in \A'\times \A'^2\simeq C^1_e(\A);
$$
for $\varphi_0 = a(x,y) \in \A'\simeq C^0_e(\A)$,
$$
D_{\psi_b}(\varphi_0)=\left(0, \; a\vn b\right) \;\in \A'\times\A'^2\simeq C^1_o(\A);
$$
for $\alpha_1 = (r,\vec Q)\in \A'\times \A'^2\simeq C^1_o(\A)$,
$$
D_{\psi_b}(\alpha_1) = \left(-\vn b\times \vec Q,\; \vn b\times\vn r, \; r\vn b \right) \;\in \A'\times\A'\times\A'^2\simeq C^2_e(\A);
$$
for $\varphi_1 = \left(r, \vec Q\right)\in \A'\times \A'^2\simeq C^1_e(\A)$,
$$
D_{\psi_b}(\varphi_1) = \left(0, \; 0, \;\vn(\vec Q\times \vn b)+\Div(\vec Q)\vn b\right) \;\in \A'\times\A'\times \A'^2\simeq C^2_o(\A).
$$
And for all $n\geq 2$, for $\alpha_n = \left(p,s,\vec R\right)\in \A'\times\A'\times\A'^2\simeq C^n_o(\A)$,
\begin{eqnarray*}
D_{\psi_b}(\alpha_n) &=& \left(\vn b\times\vn p -(n-2)\vec R\times\vn b,\; \vn b\times\vn s, \; ns\vn b\right)\\
&\in& \A'\times\A'\times\A'^2\simeq C^{n+1}_e(\A);
\end{eqnarray*}
for $\varphi_n = \left(a,d,\vec C\right)\in \A'\times\A'\times\A'^2\simeq C^n_e(\A)$,
\begin{eqnarray*}
\lefteqn{D_{\psi_b}(\varphi_n) =}\\
&& \left((n-1)\vn b\times \vec C,\; 0,\; (n-1)d\vn b+\vn \left(\vec C\times\vn b\right)+\Div(\vec C)\vn b\right)\\
&&\in \A'\times\A'\times\A'^2\simeq C^{n+1}_o(\A).
\end{eqnarray*}

In order to determine the Poisson cohomology associated to the Poisson structure $\psi_b$, we will assume that $b(x,y)\in \A'$ is a non constant, homogeneous and square-free polynomial. These hypotheses imply in particular that the following Koszul complex is exact :
$$
\setlength{\dgARROWLENGTH}{1cm}
\begin{diagram}\label{eq:Koszul}
\node{0}\arrow{e,t}{}\node{ \A'}\arrow{e,t}{\vn b}\node{\A'^2}\arrow{e,t}{\times\vn b}
\node{\A'}
\end{diagram}
$$
where the first map, from $\A'$ to $\A'^2$, maps an element $a\in \A'$ to $a\vn b$ while the second, from $\A'^2$ to $\A'$, maps $\vec G\in \A'^2$ to $\vec G\times \vn b$.
To prove that the above diagram is exact, we use the fact that, as $b$ is homogeneous, it satisfies Euler's identity : $\deg(b)\,b=xb_x+yb_y$ and because $b$ is non constant and square-free, $b_x$ and $b_y$ are coprime. Now if $\vec G=\left( \begin{matrix} f\\ g \end{matrix}\right)\in \A'^2$ satisfies $\vec G\times\vn b=0$, then one has $f b_y-g b_x=0$. This implies that $b_x$ divides $f$ in $\A'$, so that there exists $a\in \A'$ such that $f = ab_x$. This permits us to conclude that $\vec G=a\vn b$.

Notice moreover that, given a non constant homogeneous polynomial $b\in \k[x,y]$, $b$ is square-free  if and only if  the quotient vector space
$$
\A'_{sing}(b):=\frac{\A'}{\displaystyle( b_x, b_y)}=\frac{\A'}{\displaystyle\lbrace \vn b\times \vec G\mid \vec G\in \A'^2\rbrace}
$$
 is of finite dimension (see \cite{Perrin} for a proof of this fact), and in this case, one says that the surface $\mathcal F_b$ has an \emph{isolated singularity} at the origin.
The algebra $\A'_{sing}(b)$ is called the \emph{Milnor algebra} and its dimension (as a $\k$-vector space) is denoted by $\mu$ and called the \emph{Milnor number} of the singularity of $b$.
\begin{rem}
This Milnor number  and the Milnor algebra $\A'_{sing}(b)$ give information about the singularity of the surface $\mathcal F_b$ (i.e., the singularity of the Poisson structure $\psi_b$), as its multiplicity (see \cite{CoxLittleOShea}). We will see that the algebra $\A'_{sing}(b)$ appear in the Poisson cohomology spaces associated to $\psi_b$, so that this Poisson cohomology is linked to the type of the singularity of $\psi_b$. These results have to be compared with analogous results obtained in \cite{Pichereau1}, where the studied Poisson structures are non graded Poisson structures, in dimension three (i.e., defined on $\C[x,y,z]$), of the form:
$\varphi_z\, \partial_x\wedge\partial_y+ \varphi_y\, \partial_z\wedge\partial_x+\varphi_x\, \partial_y\wedge\partial_z$, where $\varphi\in \C[x,y,z]$ is a (weight-)homogeneous polynomial with an isolated singularity at the origin.
\end{rem}
 We denote by $u_0=1, u_1, \dots, u_{\mu-1}\in \A'$ homogeneous polynomials in $\A'$ such that their images in the quotient $\frac{\A'}{\displaystyle( b_x, b_y)}$ give a $\k$-basis of this quotient vector space. One can then write :
\begin{equation}\label{eq:finitedim}
\A'=\k u_0\oplus \k u_1\oplus \cdots\oplus \k u_{\mu-1}\oplus \lbrace \vn b\times \vec G\mid \vec G\in \A'^2\rbrace.
\end{equation}
Note that in the particular case where the degree of $b$ is $1$, then $\mu=0$ and stricly speaking, if we demand that $\mathcal F_b$ \emph{has} a singularity at the origin, we should probably suppose that the degree of $b$ is greater or equal to $2$.
%

%
\begin{prp}\label{prp:H0o}
Let $b(x,y)\in \A'$ be a non-constant homogeneous polynomial. Let $\psi_b$ be the Poisson structure given by the following formula :
$$
\psi_b = b_y\theta\px\pt - b_x\theta\py\pt.
$$
If $b$ is square-free then a basis of the odd Poisson cohomology $0$-space is given by the following :
$$
H^0_o(\A, \psi_b)= \k[b].
$$
\end{prp}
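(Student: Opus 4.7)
The space $C^0_o = \A'$ receives no coboundaries, so $H^0_o(\A,\psi_b)$ equals the space $Z^0_o$ of odd $0$-cocycles. Using the formula for the coboundary of $\alpha_0 = c(x,y) \in C^0_o$ given earlier in the section, the cocycle condition reduces to the single scalar equation
$$
\vn b \times \vn c = b_x c_y - b_y c_x = 0.
$$
The inclusion $\k[b] \subseteq Z^0_o$ is immediate, since $\vn(b^k) = k b^{k-1}\vn b$ is proportional to $\vn b$. Both the cocycle condition and the subalgebra $\k[b]$ are compatible with the total polynomial grading on $\A'$, so it is enough to show that every homogeneous cocycle lies in $\k[b]$.

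I plan to proceed by induction on $n = \deg(c)$. Suppose $c$ is homogeneous of degree $n > 0$ satisfying $b_x c_y = b_y c_x$. Since $b$ is non-constant, homogeneous, and square-free, the polynomials $b_x$ and $b_y$ are coprime in $\A'$ (this was noted in the excerpt, as a consequence of Euler's identity together with the square-free hypothesis). The relation $b_x c_y = b_y c_x$ then forces $b_x \mid c_x$; writing $c_x = q\, b_x$ gives $c_y = q\, b_y$, so $\vn c = q\,\vn b$ for some $q \in \A'$. Applying Euler's identity to $c$ and to $b$, and using $n \ne 0$ in characteristic zero,
$$
n\, c \;=\; x c_x + y c_y \;=\; q\,(x b_x + y b_y) \;=\; \deg(b)\, q\, b,
$$
so $b$ divides $c$. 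Write $c = b\,c_1$; then $c_1$ is homogeneous of degree $n - \deg(b) < n$, and
$$
0 \;=\; \vn b \times \vn c \;=\; \vn b \times (c_1 \vn b + b\,\vn c_1) \;=\; b\,(\vn b \times \vn c_1),
$$
so $\vn b \times \vn c_1 = 0$ by integrality of $\A'$. By induction $c_1 \in \k[b]$, and hence $c = b\,c_1 \in \k[b]$. The base case $n = 0$ is trivial, and the intermediate range $0 < n < \deg(b)$ is ruled out by a degree comparison in the Euler identity above (which would force $q = 0$, hence $c = 0$).

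The whole argument hinges on the coprimality of $b_x$ and $b_y$, which is exactly where the square-free and homogeneity hypotheses enter; once divisibility of $c$ by $b$ is established, the descent is automatic. The main (mild) obstacle is handling the degree bookkeeping in the Euler step so that the induction terminates cleanly.
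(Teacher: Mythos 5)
Your argument is correct and follows essentially the same route as the paper: the cocycle condition $\vn b\times\vn c=0$, the coprimality of $b_x$ and $b_y$ (which is exactly the paper's proof of the exactness of the Koszul complex) to get $\vn c=q\,\vn b$, and Euler's identity to force $b\mid c$, followed by a descent. The only cosmetic difference is that you descend by induction on $\deg(c)$, whereas the paper factors out the maximal power at once by writing $c=b^r h$ with $b\nmid h$ and concluding $\deg(h)=0$.
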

\begin{proof}
First, recall that one can write, under the identifications given above:
$$
H^0_o(\psi_b)=\lbrace c(x,y)\in \A'\mid \vn c\times\vn b=0\rbrace.
$$
Then let $c\in \A'$ such that $\vn c\times\vn b=0$. Because of the exactness of the Koszul complex, there exists $a(x,y)\in툎A'$ such that $\vn c=a\vn b$.  Assume that $c$ is a homogeneous polynomial then, using Euler's formula we obtain:
$$
\deg(c)\, c= xc_x+yc_y = a(xb_x+yb_y)=\deg(b) ab.
$$
This implies that either $\deg(c)=0$ or $b$ divides the polynomial $c$ in $\A'$. We then write $c=b^r h$ with $r\in \N$ and $h\in \A'$, with $b$ non dividing the polynomial $h$ in $\A'$. Then,
$$
\vn c=rb^{r-1}h\vn b + b^r\vn h \; \hbox{ and } \; 0=\vn c\times\vn b = b^r\vn h\times \vn b.
$$
From the above, we obtain that $\deg(h)=0$ and $c\in \k b^r$.
\end{proof}
\begin{prp}
Let $b(x,y)\in \A'$ be a non-constant homogeneous polynomial. Let $\psi_b$ be the Poisson structure given by the formula :
$$
\psi_b = b_y\theta\px\pt - b_x\theta\py\pt.
$$
If $b$ is square-free then the odd Poisson cohomology $1$-space vanishes:
$$
H^1_o(\A, \psi_b)\simeq \lbrace 0\rbrace.
$$
\end{prp}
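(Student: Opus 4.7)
The plan is to read off the cocycle and coboundary conditions for an odd $1$-cochain directly from the formulas for $D_{\psi_b}$ given just before the proposition, and then to apply the exactness of the Koszul complex that was established in the excerpt.

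First, write a general odd $1$-cochain as $\alpha_1 = (r, \vec Q) \in \A' \times \A'^2 \simeq C^1_o(\A)$, with $\vec Q = \left(\begin{smallmatrix} q_1 \\ q_2\end{smallmatrix}\right)$ and $r \in \A'$. Using the formula
\[
D_{\psi_b}(\alpha_1) = \bigl(-\vn b \times \vec Q,\; \vn b \times \vn r,\; r\vn b\bigr),
\]
the cocycle equation $D_{\psi_b}(\alpha_1) = 0$ splits into the three conditions $\vn b \times \vec Q = 0$, $\vn b \times \vn r = 0$, and $r\, \vn b = \vec 0$. Since $b$ is a non-constant polynomial, $\vn b \neq \vec 0$ in $\A'^2$; because $\A'$ is an integral domain, the third equation forces $r = 0$, which in turn makes the second equation automatic.

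It remains to analyze $\vn b \times \vec Q = 0$. This is, up to sign, exactly the second condition in the Koszul complex
\[
\begin{diagram}
\node{0}\arrow{e,t}{}\node{\A'}\arrow{e,t}{\vn b}\node{\A'^2}\arrow{e,t}{\times\vn b}\node{\A'}
\end{diagram}
\]
whose exactness at $\A'^2$ was proved earlier (using that $b$ is homogeneous and square-free, hence $b_x, b_y$ are coprime and $b = \tfrac{1}{\deg b}(x b_x + y b_y)$). Thus there exists $a \in \A'$ with $\vec Q = a\, \vn b$, and every odd $1$-cocycle has the form $\alpha_1 = (0, a\,\vn b)$ for some $a \in \A'$.

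Finally, matching this against the coboundary formula for an even $0$-cochain $\varphi_0 = a \in \A' \simeq C^0_e(\A)$, namely $D_{\psi_b}(\varphi_0) = (0, a\,\vn b)$, we see that every odd $1$-cocycle is the coboundary of such an $a$. Therefore $H^1_o(\A, \psi_b) = \{0\}$. There is no real obstacle in this argument: everything reduces to the Koszul exactness step, which has already been done in the preliminary discussion, so the proof is essentially just a bookkeeping verification with the explicit formulas for $D_{\psi_b}$.
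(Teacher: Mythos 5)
Your proof is correct and follows essentially the same route as the paper: read off the three cocycle conditions from the formula for $D_{\psi_b}$ on $C^1_o$, use that $b$ is non-constant to force $r=0$, invoke the exactness of the Koszul complex to write $\vec Q = a\,\vn b$, and recognize $(0,a\,\vn b)$ as the coboundary of the even $0$-cochain $a$. No gaps; this matches the paper's argument step for step.
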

\begin{proof}
First, we have :
$$
H^1_o(\psi_b)\simeq \frac{\displaystyle \lbrace(r,\vec Q)\in \A'\times\A'^2\mid \vn b\times \vec Q=0; r\vn b=\vec 0; \vn b\times\vn r=0\rbrace}{\displaystyle \lbrace (0, a\vn b)\in \A'\times\A'^2\mid a\in \A'\rbrace}.
$$
Let $(r,\vec Q)\in Z^1_o(\psi_b)$. Then, because $b$ is non-constant and because $ r\vn b=\vec 0$, we have that $r=0$. Moreover, using the exactness of the Koszul complex the condition $ \vn b\times \vec Q=0$ implies that there exists $a\in \A'$ such that $\vec Q=a\vn b$, which permits to conclude that $(r,\vec Q)\in B^1_o(\psi_b)$ and that $H^1_o(\A, \psi_b)\simeq \lbrace 0\rbrace$.
\end{proof}
\begin{prp}
Let $b(x,y)\in \A'$ be a non-constant homogeneous polynomial. Let $n\in \N$ satisfying $n\geq 3$. Let $\psi_b$ be the Poisson structure given by the following formula :
$$
\psi_b = b_y\theta\px\pt - b_x\theta\py\pt.
$$
If $b$ is square-free then a basis of the odd Poisson cohomology $n$-space is given by the following :
\begin{eqnarray*}
H^n_o(\psi_b)&\simeq& \bigoplus_{i=0}^{\mu-1}\k\left((n-2)u_i, 0, -\vn u_i \right)\\
                             &\simeq& \bigoplus_{i=0}^{\mu-1}\k\left((n-2)u_i\px\py\pt^{n-2}+(u_i)_y\theta\px\pt^{n-1}-(u_i)_x\theta\py\pt^{n-1}\right).
\end{eqnarray*}
\end{prp}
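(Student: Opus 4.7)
The plan is to construct an isomorphism $\bar\Phi: H^n_o(\psi_b) \xrightarrow{\sim} \A'_{sing}(b)$ by sending the class of a cocycle $(p, 0, \vec R)$ to $[p]$ in the Milnor algebra, and then to verify that the cocycles $((n-2)u_i, 0, -\vn u_i)$ are preimages of $(n-2)[u_i]$, so form a basis (the scalar $n-2$ being invertible since $n \geq 3$).

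I first unfold the cocycle condition using the explicit formula for $D_{\psi_b}$ on $C^n_o$. For $\alpha_n = (p, s, \vec R)$, the component $ns\vn b = 0$ forces $s = 0$ (since $b$ is non-constant so $\vn b \neq \vec 0$, and $n \geq 3$), and the remaining equation rewrites as $\vn b \times (\vn p + (n-2)\vec R) = 0$. By exactness of the Koszul complex this means $\vn p + (n-2)\vec R = a\,\vn b$ for some $a \in \A'$; in particular, for any $p \in \A'$ the triple $(p, 0, -\tfrac{1}{n-2}\vn p)$ is a cocycle. For coboundaries, since $n-1 \geq 2$, the formula for $D_{\psi_b}$ on $C^{n-1}_e$ applies and shows that the first component of any element of $B^n_o$ is $(n-2)\vn b \times \vec C \in \langle b_x, b_y\rangle$; hence the assignment $[(p, 0, \vec R)] \mapsto [p]$ gives a well-defined map $\bar\Phi$ to $\A'_{sing}(b)$, which is surjective by the construction just given.

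The hard part is injectivity. Suppose $[p] = 0$, so $p = \vn b \times \vec G_0$ for some $\vec G_0 \in \A'^2$; setting $\vec C_0 = \tfrac{1}{n-2}\vec G_0$, the coboundary $D_{\psi_b}(0, 0, \vec C_0)$ has first component exactly $p$, so the difference $(p, 0, \vec R) - D_{\psi_b}(0, 0, \vec C_0)$ is a cocycle of the form $(0, 0, \vec R')$. The cocycle condition on it then reduces to $\vec R' \times \vn b = 0$, so by Koszul $\vec R' = \lambda\,\vn b$ for some $\lambda \in \A'$. The finishing observation is that $D_{\psi_b}(0, \tfrac{\lambda}{n-2}, 0) = (0, 0, \lambda\,\vn b)$, which writes $(p, 0, \vec R)$ as a genuine coboundary and closes the argument. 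The delicate point is precisely the use of both the $\vec C$ slot (to kill the $p$-component) and the independent $d$ slot (to kill the residual multiple of $\vn b$) of $\varphi_{n-1}$, together with the two halves of Koszul exactness.

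Once $\bar\Phi$ is an isomorphism, the stated basis is read off: the classes $[u_i]$ of the distinguished representatives of the Milnor algebra lift under the section $p \mapsto (p, 0, -\tfrac{1}{n-2}\vn p)$ to cocycles $(u_i, 0, -\tfrac{1}{n-2}\vn u_i)$, and rescaling each by the nonzero scalar $n-2$ yields precisely the cocycles $((n-2)u_i, 0, -\vn u_i)$ whose cohomology classes span $H^n_o(\psi_b)$, as claimed.
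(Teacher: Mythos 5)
Your proof is correct and rests on the same essential ingredients as the paper's: forcing $s=0$, Koszul exactness to solve the cocycle condition $\vn b\times(\vn p+(n-2)\vec R)=0$, and the use of both the $\vec C$ slot and the $d$ slot of an even $(n-1)$-cochain to manufacture the needed coboundaries. The difference is purely organizational: you package the computation as an isomorphism $H^n_o(\psi_b)\simeq \A'_{sing}(b)$ (well-defined because coboundaries have first component $(n-2)\vn b\times\vec C\in\langle b_x,b_y\rangle$, surjective via the section $p\mapsto\bigl(p,0,-\tfrac1{n-2}\vn p\bigr)$, injective via your two-step coboundary correction), whereas the paper decomposes $p$ directly through $\A'=\bigoplus_i\k u_i\oplus\lbrace\vn b\times\vec G\rbrace$ and then verifies that the span of the $((n-2)u_i,0,-\vn u_i)$ meets $B^n_o$ trivially; the two arguments are equivalent, and your framing simply makes explicit the identification with the Milnor algebra that the paper records in the remark following the proposition.
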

\begin{rem}
With this result, one easily sees that $H^n_o(\psi_b)\simeq \A'_{sing}(b)$, when $n\geq 3$.
\end{rem}
\begin{proof}
Let us recall that one can write:
\begin{eqnarray*}
\lefteqn{H^n_o(\psi_b) \simeq}\\
&&\!\!\frac{\displaystyle\left\lbrace\begin{array}{l}  (p, s, \vec R)\in \A'\times\A'\times\A'^2\mid\\
\phantom{blabla} \vn b\times\vn p+(n-2)\vn b\times\vec R=0; ns\vn b=\vec 0; \vn b\times\vn s=0\end{array}\right\rbrace}{\displaystyle\left\lbrace\begin{array}{l} \left((n-2)\vn b\times \vec C, 0, \vn\left(\vec C\times\vn b\right)+\Div(\vec C)\vn b+(n-2)d\vn b\right)\\
\qquad \qquad \qquad \qquad \qquad \qquad\mid (a, d, \vec C)\in \A'\times\A'\times\A'^2\end{array}\right\rbrace}.
\end{eqnarray*}
Let us then consider an element $(p, s, \vec R)\in Z^n_o(\psi_b)$. As $ns\vn b=\vec 0$ and $b$ is an non-constant polynomial, one necessarily obtains that $s=0$. The cocycle condition now becomes
$$
 \vn b\times\vn p+(n-2)\vn b\times\vec R=0, \; \hbox{ i.e., }\;
\vn b\times\left(  \vn p+(n-2)\vec R\right)=0.
$$
Because the Koszul complex is exact, this implies the existence of an element $f\in \A'$ satisfying
\begin{equation}\label{eq:eqncocy}
 \vn p+(n-2)\vec R = (n-2) f\vn b.
 \end{equation}
Now, (\ref{eq:finitedim}) implies that there exist $\lambda_0, \lambda_1, \dots, \lambda_{\mu-1}\in\k$  and $\vec C\in \A'^2$ such that :
$$
p=(n-2)\vn b\times \vec C + \sum_{i=0}^{\mu-1}\lambda_i (n-2)u_i.
$$
Thus,
$$
\vn p=(n-2)\vn\left(\vn b\times \vec C\right) + \sum_{i=0}^{\mu-1}\lambda_i (n-2)\vn u_i,
$$
and (\ref{eq:eqncocy}) becomes:
$$
\vec R=-\vn\left(\vn b\times \vec C\right) - \sum_{i=0}^{\mu-1}\lambda_i \vn u_i + f\vn b.
$$
Let $d:=\frac{1}{(n-2)}\left(f-\Div(\vec C)\right)$, then:
$$
\vec R=-\vn\left(\vn b\times \vec C\right) + (n-2)d\vn b + \Div(\vec C)\vn b- \sum_{i=0}^{\mu-1}\lambda_i \vn u_i.
$$
Finally,
\begin{eqnarray*}
(p, s, \vec R) &=& \left((n-2)\vn b\times \vec C, 0,  -\vn\left(\vn b\times \vec C\right) + (n-2)d\vn b + \Div(\vec C)\vn b\right) \\
      &+&\sum_{i=0}^{\mu-1}\lambda_i\left((n-2)u_i, 0, -\vn u_i\right)\\
      &\in& B^n_o(\psi_b) + \sum_{i=0}^{\mu-1}\k\left((n-2)u_i, 0, -\vn u_i\right).
\end{eqnarray*}
which implies that:
$$
H^n_o(\psi_b)= \sum_{i=0}^{\mu-1}\k \left((n-2)u_i, 0, -\vn u_i\right).
$$
It now remains to show that this sum is a direct one, by considering $\lambda_0, \lambda_1, \dots, \lambda_{\mu-1}\in \k$ and $(d, \vec C)\in\A'\times\A'^2$ such that
\begin{eqnarray*}
\lefteqn{\sum_{i=0}^{\mu-1}\lambda_i\left((n-2)u_i, 0, -\vn(u_i)\right)=}\\
&& \left((n-2)\vn b\times \vec C, 0, -\vn\left(\vn b\times C\right)+\Div(\vec C)\vn b+(n-2)d\vn b\right).
\end{eqnarray*}
But then, $\sum_{i=0}^{\mu-1}\lambda_i u_i=\vn b\times \vec C\in \langle b_x, b_y\rangle$, so that, by definition of the $u_i$, we conclude that $\lambda_i=0$ for all $i=0, \dots, \mu-1$. We finally have obtained the desired result.
\end{proof}

The difficult part of the computation of the odd Poisson cohomology associated to the Poisson structure $\psi_b= b_y\theta\px\pt - b_x\theta\py\pt$ lies in the second Poisson cohomology space, which we give here.
\begin{prp}\label{prp:H2}
Let $b(x,y)\in \A'$ be a non-constant homogeneous polynomial. Let $\psi_b$ be the Poisson structure given above.
If $b$ is square-free then a basis of the odd Poisson cohomology $2$nd-space is given by:
\begin{eqnarray*}
H^2_o(\psi_b)&\simeq& \k[b]\; (1, 0, 0)  \oplus \bigoplus_{\stackrel{i=0, \dots, \mu-1}{\deg(u_i)=\deg(b)-2}}
\k[b]\; u_i\;(0, 0, \vn b)\\
&\oplus& \bigoplus_{j=0}^{\mu-1} \k[b]\; u_j\;(0, 0, \vec E)
\;\;\oplus\;\;\bigoplus_{\stackrel{i=0, \dots, \mu-1}{\deg(u_i)\not=\deg(b)-2}}\k[b]\;(0, 0, \vn u_i)\\
&\oplus&
\bigoplus_{\stackrel{j=1, \dots, \mu-1}{\deg(u_j)=\deg(b)-2}}\k\; (0, 0, \vn u_j),
\end{eqnarray*}
where $\vec E:=\left(\begin{smallmatrix}y\\ -x\end{smallmatrix}\right)\in \A'^2$.
This can be written more explicitly as
\begin{eqnarray*}
\lefteqn{H^2_o(\psi_b)\;\;\simeq\;\;\k[b]\; \px\py \;\oplus \bigoplus_{\stackrel{i=0, \dots, \mu-1}{\deg(u_i)=\deg(b)-2}}\k[b]\;  u_i\; \psi_b}\\
&\oplus&\bigoplus_{k=0}^{\mu-1} \k[b]\; u_j\;\left(x\theta\px\pt+y\theta\py\pt\right)\\
&\oplus&\bigoplus_{\stackrel{i=0, \dots, \mu-1}{\deg(u_i)\not=\deg(b)-2}}\k[b]\;\left((u_i)_y\theta\px\pt-(u_i)_x\theta\py\pt\right) \\
&\oplus&\bigoplus_{\stackrel{j=1, \dots, \mu-1}{\deg(u_j)=\deg(b)-2}}\k\; \left((u_j)_y\theta\px\pt-(u_j)_x\theta\py\pt\right).
\end{eqnarray*}
\end{prp}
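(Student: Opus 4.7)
The plan is to extend the cocycle--coboundary analysis of the previous propositions: compute $Z^2_o$, compute $B^2_o$, and then identify a basis of the quotient. Specialising the formula for $D_{\psi_b}$ on $C^n_o$ to $n=2$, a triple $(p,s,\vec R)$ is a cocycle iff $\vn b\times\vn p=0$, $\vn b\times\vn s=0$ and $2s\vn b=\vec 0$. Non-constancy of $b$ forces $s=0$, Proposition~\ref{prp:H0o} gives $p\in\k[b]$, and the $\vec R$-slot is free. The formula for $D_{\psi_b}$ on $C^1_e$ shows that coboundaries only hit the $\vec R$-slot, via $\vec Q\mapsto L(\vec Q):=\vn(\vec Q\times\vn b)+\Div(\vec Q)\vn b$ (the $r$-entry contributes nothing). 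Hence
\[
H^2_o\simeq\k[b]\cdot(1,0,0)\;\oplus\;\bigl(\A'^2/L(\A'^2)\bigr),
\]
and the first summand is the $\k[b]\px\py$ piece of the statement.

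The work is the second summand. Using $\Div\circ\vn=0$, $\vec E\times\vn b=db$, $\Div(u\vec E)=-(m+2)u$ for $u$ homogeneous of degree $m$, and $\Div(a\vn b)=\vn a\times\vn b$, three key identities emerge (with $d=\deg b$):
\begin{align*}
L(\vn u)&=\vn(u_xb_y-u_yb_x),\\
L(u\vec E)&=d\,b\,\vn u+(d-m-2)\,u\,\vn b,\\
L(a\vn b)&=(\vn a\times\vn b)\,\vn b.
\end{align*}
The middle identity is decisive: when $\deg u=d-2$ it shows $b\,\vn u\in L(\A'^2)$ while leaving $u\vn b$ untouched, collapsing the would-be $\k[b]$-module $\k[b]\vn u$ to a single copy of $\k\vn u$ in cohomology; when $\deg u\ne d-2$ it gives $u\vn b\equiv-\tfrac{d}{d-m-2}\,b\vn u\pmod{L(\A'^2)}$, making $u\vn b$ redundant relative to the $b\vn u$-family. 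This accounts for the dichotomy between the four summands claimed.

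To exhibit spanning, I would argue degree by degree, using (\ref{eq:finitedim}) and the third identity to clear the $\vn b$-component of $\vec R$ down, modulo $L(a\vn b)$, to a $\k[b]$-combination of $u_i\vn b$ with $\deg u_i=d-2$; then use the second identity to normalise the $\vec E$-part to a $\k[b]$-combination of $u_j\vec E$; and finally use the first identity together with (\ref{eq:finitedim}) to normalise the $\vn u$-part, where the exceptional $\deg u=d-2$ case contributes only a $\k$-multiple because the corresponding $b\vn u$ was already consumed. For linear independence, a combination equal to $L(\vec Q)$ can be tested by reducing modulo $b$: this leaves the $\vec E$-, $\vn b$- and $\vn u_i$-contributions in independent directions of $(\A'/(b))^2$, and the Milnor basis $\{\bar u_i\}$ is by definition linearly independent there; this disposes of the constant-in-$b$ parts of the coefficients, and iterating after division by $b$ kills the higher coefficients.

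The main obstacle is that $L$ is not $\k[b]$-linear: direct computation gives
\[
L(b\vec Q)-b\,L(\vec Q)=(\vec Q\times\vn b+\vec Q\cdot\vn b)\,\vn b,
\]
so multiplication by $b$ does not preserve $L(\A'^2)$ termwise. It \emph{does} preserve $L(\A'^2)$ at the end (as it must, since $b\in H^0_o$ acts on $H^2_o$), and making that cancellation explicit at each step of the reduction is the delicate bookkeeping that consumes most of the proof, but it is precisely what lets the cohomology be packaged into the $\k[b]$-module summands listed in the statement.
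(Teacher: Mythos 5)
Your reduction to $H^2_o\simeq\k[b]\,(1,0,0)\oplus\A'^2/D'(\A'^2)$ (your $L$ is the paper's $D'$) and your three identities for $L(\vn u)$, $L(u\vec E)$, $L(a\vn b)$ agree with the paper, as does the dichotomy $\deg(u_i)=\deg(b)-2$ versus $\deg(u_i)\ne\deg(b)-2$. But there are genuine gaps. First, your spanning step silently uses much more than (\ref{eq:finitedim}): to conclude that the $\vn b$-coefficient, reduced modulo the elements $(\vn a\times\vn b)\vn b=L(a\vn b)$, lands in a $\k[b]$-span of the $u_i$, you need precisely the statement $\A'=\bigoplus_i\k[b]u_i\oplus\lbrace\vn h\times\vn b\mid h\in\A'\rbrace$, i.e.\ Lemma \ref{lma:sqfree2} of the paper; note that $\lbrace\vn h\times\vn b\rbrace$ is strictly smaller than the Jacobian ideal appearing in (\ref{eq:finitedim}), and the lemma requires its own induction on degree using exactness of both the de Rham and the Koszul complexes. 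The paper also needs de Rham exactness to split a general $\vec F$ as $\tfrac{-1}{d+1}\Div(\vec F)\vec E+\vn k$ before your ``clearing'' can even begin; your sketch assumes such a decomposition into $\vn b$-, $\vec E$- and gradient parts without producing it. Second, your independence test ``reduce modulo $b$'' does not work: the $u_i$ form a basis of $\A'/\langle b_x,b_y\rangle$, not of $\A'/(b)$, and a coboundary $D'(\vec Q)$ (e.g.\ $D'(\vn u)=\vn(\vn u\times\vn b)$) has no reason to vanish modulo $b$, so reduction mod $b$ does not detect coboundaries. The paper's directness argument instead takes the divergence (using $\Div(k\vn\ell)=\vn k\times\vn\ell$) to kill the $\vec E$-coefficients via (\ref{eq:sqfree2}), crosses with $\vec E$, produces a Casimir and invokes Proposition \ref{prp:H0o}, Koszul exactness and Lemma \ref{lma:sqfree2}; this is the bulk of the proof and is not replaced by your two-sentence sketch.

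Finally, the ``main obstacle'' you identify is not there: with the paper's conventions $\Div\left(\begin{smallmatrix}f\\ g\end{smallmatrix}\right)=g_x-f_y$ one checks directly that $D'(b\vec Q)=b\,D'(\vec Q)$ for all $\vec Q\in\A'^2$ (the correction term you write, involving a dot product, comes from using the Euclidean divergence $f_x+g_y$, which is not the operator in the complex). So $D'$ does commute with multiplication by $b$ — the paper uses this identity explicitly to absorb terms like $b^i\vn u_j$ into $\im(D')$ — and the ``delicate bookkeeping'' you propose to spend most of the proof on is tracking a failure of $b$-linearity that does not exist, while the genuinely delicate parts (Lemma \ref{lma:sqfree2} and the directness of the sum) are left unproved.
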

In order to be able to prove this proposition, we need the following
\begin{lma}\label{lma:sqfree2}
Let $b(x,y)\in \A'$ be a non-constant homogeneous polynomial. If $b$ is square-free then, we have:
\begin{equation}\label{eq:sqfree2}
\A'=\bigoplus_{i=0}^{\mu-1}\k[b] u_i \;\oplus\; \lbrace \vn h\times \vn b\mid h\in \A'\rbrace.
\end{equation}
\end{lma}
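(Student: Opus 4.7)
The plan is to prove the direct sum decomposition by establishing spanning and directness separately. The two main tools will be Euler's identity $d\cdot b=xb_x+yb_y$ (with $d:=\deg b$) and the coprimality of $b_x$ and $b_y$, which follows from the square-freeness of $b$ and was invoked already in the proof of exactness of the Koszul complex above. A key bookkeeping fact throughout is that multiplication by $b$ preserves the subspace $\{\vn h\times\vn b\mid h\in \A'\}$, since $\vn(bh)\times\vn b=b(\vn h\times\vn b)$.

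For the spanning inclusion, I would argue by induction on the degree of a homogeneous $P\in\A'$. By (\ref{eq:finitedim}), write $P=\sum_i\lambda_iu_i+\alpha b_x+\beta b_y$ with $\lambda_i\in\k$ and $\alpha,\beta\in\A'$ homogeneous, and concentrate on re-expressing the piece in $\langle b_x,b_y\rangle$ in the form
\begin{equation*}
\alpha b_x+\beta b_y=\vn h\times\vn b+p\,b
\end{equation*}
for suitable $h,p\in\A'$. Substituting $b=\tfrac{1}{d}(xb_x+yb_y)$ on the right and matching coefficients of $b_x$ and $b_y$ reduces this to the overdetermined system $h_x=\beta-\tfrac{py}{d}$, $h_y=\tfrac{px}{d}-\alpha$; imposing the integrability $h_{xy}=h_{yx}$ and using Euler's formula on the homogeneous $p$ of degree $n-d$ yields $p=\tfrac{d(\alpha_x+\beta_y)}{n-d+2}$, well-defined in characteristic zero, with the low-degree cases $n<d$ reducing to $p=0$ and $h$ constructed directly. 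Now $p$ has degree $n-d<n$, so the inductive hypothesis writes $p\in\sum_i\k[b]u_i+\{\vn h'\times\vn b\}$, and multiplying by $b$ preserves this inclusion (as noted above), hence $pb$, and so $P$, lies in the desired subspace.

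For directness, suppose $\sum_i p_i(b)u_i=\vn h\times\vn b$. Since the right-hand side lies in $\langle b_x,b_y\rangle$ and $b\in\langle b_x,b_y\rangle$ by Euler, reducing modulo $\langle b_x,b_y\rangle$ gives $\sum_i p_i(0)u_i\equiv 0$ in $\A'_{sing}(b)$, forcing $p_i(0)=0$ by the choice of the $u_i$. Hence $p_i=b\,q_i$ and $b\sum_iq_i(b)u_i=\vn h\times\vn b$, and the argument will iterate as soon as one knows how to ``divide the right-hand side by $b$''.

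This divisibility lemma --- \emph{if $bR=\vn h\times\vn b$ with $R,h\in\A'$, then $R=\vn h'\times\vn b$ for some $h'\in\A'$} --- is the main obstacle. I would prove it by multiplying through by $d$ and replacing $db$ by $xb_x+yb_y$, rearranging to $(xR+dh_y)b_x+(yR-dh_x)b_y=0$; coprimality of $b_x,b_y$ then produces $\gamma\in\A'$ with $xR+dh_y=\gamma b_y$ and $dh_x-yR=\gamma b_x$. Imposing $h_{xy}=h_{yx}$ on the resulting expressions for $h_x$ and $h_y$, and using Euler once more on a homogeneous $R$ of degree $r$, yields $(r+2)R=\vn\gamma\times\vn b$. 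Since $r+2\neq 0$ in characteristic zero, $R=\vn(\gamma/(r+2))\times\vn b$ is of the required form; for non-homogeneous $R$ one decomposes into graded pieces. Iterating the two-step procedure (reduce mod $\langle b_x,b_y\rangle$, then divide by $b$) starting from $p_i=b\,q_i$ shows $p_i\in(b^k)$ for every $k$, and since each $p_i$ is a polynomial in $b$ this forces $p_i=0$.
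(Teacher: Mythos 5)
Your proposal is correct, and its overall skeleton matches the paper's: spanning is proved by induction on the degree of a homogeneous polynomial using Euler's identity and the exactness of the de Rham complex, and directness by a descent in powers of $b$. In fact your correction term $p=\tfrac{d(\alpha_x+\beta_y)}{n-d+2}$ is (up to sign and a nonzero constant) exactly the $\Div(\vec F)$ to which the paper applies its induction hypothesis; you just construct the potential $h$ directly from the integrability condition $h_{xy}=h_{yx}$ instead of going through the paper's identity $\Div(k\vec E)=-(\deg(k)+2)k$ and the decomposition $\vec F=\tfrac{-1}{\deg(f)-\deg(b)+2}\Div(\vec F)\vec E-\vn k$. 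The genuine divergence is in the directness argument: the paper runs a minimal-counterexample descent on the smallest power $j_0$ of $b$ appearing in a relation, using Koszul exactness and a divergence computation that multiplies the coefficients by nonzero degree constants, whereas you first kill the constant terms modulo $\langle b_x,b_y\rangle$ and then invoke an explicitly isolated divisibility lemma ($bR=\vn h\times\vn b$ implies $R=\vn h'\times\vn b$), proved via coprimality of $b_x,b_y$, mixed partials and Euler, iterating to get $t^k\mid p_i(t)$ for all $k$. Your version buys a clean, reusable statement and avoids tracking the modified coefficients $\tilde\gamma_{i,j}$; the paper's stays entirely inside the $\vec E$, $\Div$, Koszul calculus it has already set up. One small point to tighten: cancelling $b_y$ in your divisibility lemma tacitly assumes $b_y\neq 0$; this is harmless, since for square-free homogeneous $b$ of degree $\geq 2$ both partials are nonzero, and when $\deg(b)=1$ one has $\mu=0$, so the directness assertion the lemma serves is vacuous (and the spanning step never uses the lemma).
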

\begin{proof}[Proof of Lemma \ref{lma:sqfree2}]
We first prove that $\A'=\sum_{i=0}^{\mu-1}\k[b] u_i \;\oplus\; \lbrace \vn h\times \vn b\mid h\in \A'\rbrace$. To do this, let $f\in \A'$ a homogeneous polynomial in $\k[x,y]$. According to (\ref{eq:finitedim}), there exist $\lambda_0, \lambda_1, \dots, \lambda_{\mu-1}\in \k$ and $\vec F=\left(\begin{smallmatrix}f_1\\ f_2\end{smallmatrix}\right)\in \A'^2$ such that
\begin{equation}\label{eq:F}
f= \vec F\times \vn b+ \sum_{i=0}^{\mu-1}\lambda_i u_i,
\end{equation}
and such that $f_1$ and $f_2$ are two homogeneous polynomials of $\k[x,y]$, of degree $\deg(f_1)=\deg(f_2)=\deg(f)-\deg(b)+1$. We will now proceed by induction on $\deg(f)$.

First, if $\deg(f)\leq \deg(b)-1$, then $f_1=a\in \k$ and $f_2=b\in \k$, so that one can write $\vec F=\vn h$, with $h:= ax+by\in \A'$ and $f=\vn h\times \vn b+\sum_{i=0}^{\mu-1}\lambda_iu_i$.

Secondly, let $d=\deg(f)$ and suppose that $d\geq \deg(b)$. We also suppose that, for any homogeneous polynomial $g\in A'$ of degree less or equal to $d-1$, we have $g\in \sum_{i=0}^{\mu-1}\k[b] u_i \;\oplus\; \lbrace \vn h\times \vn b\mid h\in \A'\rbrace$. Because of Euler's formula, for any homogeneous polyomial $k\in툎A'$, we have $\Div(k\vec E)=-(\deg(k)+2)k$. As $\Div(\vec F)=0$  or $\deg(\Div(\vec F))=\deg(f)-\deg(b)$, we have:
$$
\Div(\Div(\vec F)\vec E + (\deg(f)-\deg(b)+2)\vec F)=0.
$$
Now, using the exactness of the de Rham complex, we obtain the existence of a homogeneous polynomial $k\in \A'$ such that:
\begin{eqnarray}\label{eq:vecF}
 \vec F = \frac{-1}{(\deg(f)-\deg(b)+2)} \Div(\vec F)\vec E -\vn k.
\end{eqnarray}
Moreover, $\deg(\Div(\vec F))=\deg(f)-\deg(b)<\deg(f)$ (by hypothesis, $b$ is non-constant), so that we can apply the induction hypothesis on $\Div(\vec F)$ and obtain the existence of $\ell\in \A'$ and $\alpha_{i,j}\in \k$, for all $j\in \N$ and $i=0,\dots, \mu-1$ such that:
$$
\Div(\vec F)=\vn \ell\times \vn b+\sum_{i=0}^{\mu-1}\sum_{j\in \N}\alpha_{i,j}\; b^j\; u_i,
$$
where of course, for each $i\in\lbrace 0, \dots, \mu-1\rbrace$, only a finite number of the $\alpha_{i,j}$ are non-zero (so that the previous sum is well-defined). Then, by (\ref{eq:vecF}), we have:
$$
\vec F= \frac{-1}{(\deg(f)-\deg(b)+2)} \left(\vn \ell\times \vn b+\sum_{i=0}^{\mu-1}\sum_{j\in \N}\alpha_{i,j}\; b^j\; u_i\right)\vec E -\vn k,
$$
and by (\ref{eq:F}), we obtain:
\begin{eqnarray*}
f &=&  \frac{-1}{(\deg(f)-\deg(b)+2)} \left(\vn \ell\times \vn b+\sum_{i=0}^{\mu-1}\sum_{j\in \N}\alpha_{i,j}\; b^j\; u_i\right)\vec E\times \vn b\\
&& - \vn k\times\vn b\; +\; \sum_{i=0}^{\mu-1}\lambda_i u_i.
\end{eqnarray*}
Now, by Euler's formula, we compute $\vec E\times \vn b=\deg(b)\, b$ and because $\vn b\times\vn b=0$, we also write $ \left(\vn \ell\times \vn b\right)b=\left(\vn (\ell b)\times \vn b\right)$. Thus,
\begin{eqnarray*}
f &=&  \frac{-\deg(b)}{(\deg(f)-\deg(b)+2)} \left(\vn (b\ell)\times \vn b\right)\\
&&+ \sum_{i=0}^{\mu-1}\sum_{j\in \N} \left(\frac{\deg(b) \;\alpha_{i,j}}{(\deg(f)-\deg(b)+2)}\right)\; b^{j+1}\; u_i\\
&& - \vn k\times\vn b\; +\; \sum_{i=0}^{\mu-1}\lambda_i u_i\\
&\in & \lbrace\vn h\times\vn b\mid h\in \A' \rbrace + \sum_{i=0}^{\mu-1}\k[b]\; u_i.
\end{eqnarray*}
We then have shown that $\A'=\sum_{i=0}^{\mu-1}\k[b] u_i \;+\; \lbrace \vn h\times \vn b\mid h\in \A'\rbrace$,
and it remains to show that this sum is a direct one. To do this, we suppose on the contrary that this sum is not direct.
%
%
%
Then we define $j_0$ as being the smaller integer such that there exists $0\leq i_0\leq \mu-1$, a family of constants $\gamma_{i,j}\in \k$, where $j\in \N$, $i=0,\dots, \mu-1$ and $\gamma_{i_0, j_0}\not=0$ and $p\in \A'$ satisfying an equation of the form:
\begin{eqnarray}\label{eq:directsumH2}
\sum_{i=0}^{\mu-1}\sum_{j\in \N} \gamma_{i,j}\; b^j\; u_i=\vn p\times\vn b.
\end{eqnarray}
Now, if $j_0=0$, then there exist a family of constants $\gamma_{i,j}\in \k$, where $j\in \N$, $i=0,\dots, \mu-1$ and $\gamma_{i_0, 0}\not=0$ and $p\in \A'$ satisfying:
\begin{eqnarray*}
\sum_{i=0}^{\mu-1} \gamma_{i, 0}\; u_i&=&
-\sum_{i=0}^{\mu-1}\sum_{j\in \N^*} \gamma_{i,j}\; b^j\; u_i +\vn p\times\vn b.
\end{eqnarray*}
As $b=\deg(b)(xb_x+yb_y)$, this leads to:
$$
\sum_{i=0}^{\mu-1} \gamma_{i, 0}\; u_i \in \langle b_x, b_y\rangle,
$$
which implies, regarding the definition of the $u_i$, that $\gamma_{i, 0}=0$, for all $0\leq i\leq \mu-1$. We obtain a contradiction with the definition of $j_0$.

Now, assuming that $j_0\geq 1$ and using once more $b=\frac{1}{\deg(b)}\vec E\times \vn b$ in~(\ref{eq:directsumH2}),
$$
\sum_{i=0}^{\mu-1}\sum_{j\geq j_0} \frac{\gamma_{i,j}}{\deg(b)}\; b^{j-1}\; u_i\;\vec E\times \vn b=\vn p\times\vn b.
$$
(Recall that, according to the definition of $j_0$, for all $0\leq i\leq \mu-1$ and all $j\leq j_0-1$, one has $\gamma_{i,j}=0$.)
As the Koszul complex is exact, there exists $d\in \A'$ satisfying:
$$
\sum_{i=0}^{\mu-1}\sum_{j\geq j_0} \frac{\gamma_{i,j}}{\deg(b)}\; b^{j-1}\; u_i\vec E=\vn p+d\vn b.
$$
Computing the divergence of this,
$$
\sum_{i=0}^{\mu-1}\sum_{j\geq j_0} \frac{\gamma_{i,j}}{\deg(b)}\left(\deg(b)(j-1)+\deg(u_i)+2\right)\; b^{j-1}\; u_i= - \vn d\times\vn b.
$$
Denoting by $\tilde \gamma_{i,j}$ the constant : $\tilde \gamma_{i,j}:=\frac{\gamma_{i,j+1}}{\deg(b)}\left(\deg(b)j+\deg(u_i)+2\right)$, we obtain the equation:
$$
\sum_{i=0}^{\mu-1}\sum_{j'\geq j_0-1} \tilde\gamma_{i,j'}\; b^{j'}\; u_i=- \vn d\times\vn b,
$$
with $\tilde\gamma_{i_0, j_0-1}=\frac{\gamma_{i_0, j_0}}{\deg(b)}\left(\deg(b)(j_0-1)+\deg(u_{i_0})+2\right)\not= 0$. We obtain a contradiction with the definition of $j_0$.
Finally, we have shown the fact that the previous sum is direct and the lemma is proved.
\end{proof}

We now prove Proposition \ref{prp:H2}. To do this, we denote by $D':\A'^2\to \A'^2$ the operator given, for $\vec Q\in \A'^2$ by $D'(\vec Q):= -\vn \left(\vn b\times\vec Q\right)+\Div(\vec Q)\, \vn b$.
\begin{rem}\label{rem:D'(HE)}
Using Euler's formula, we obtain, for every homogeneous polynomial $h\in \A'$,
\begin{equation}\label{eq:D'(HE)1}
\D'(h\vec E) = \deg(b)\,b\, \vn h+\left(\deg(b)-\deg(h)-2\right)h\vn b.
\end{equation}
Secondly, we compute
$$
\left(\vn h\times \vn b\right)\vec E = \deg(b)\, b\, \vn h-\deg(h)\, h\, \vn b.
$$
These two equalities imply:
\begin{equation}\label{eq:D'(HE)}
\D'(h\vec E) = \left(\vn h\times\vn b\right)\vec E + (\deg(b)-2)h\vn b.
\end{equation}
\end{rem}
\begin{rem}
According to proposition \ref{prp:H2}, we see that the Poisson structure $\psi_b$ is an odd $2$-coboundary for the Poisson cohomology associated to $\psi_b$ itself if and only if $\deg(b)\not=2$ and this is due to the equality $D'(\vec E)=\left(\deg(b)-2\right)\vn b$.
\end{rem}
{\bf Proof of proposition \ref{prp:H2}}\\
Recall that, with the help of the identifications given previously, we have
\begin{eqnarray*}
\lefteqn{H^2_o(\psi_b) \simeq }\\
&&\!\!\frac{\displaystyle\left\lbrace(p, s, \vec R)\in \A'\times\A'\times\A'^2\mid \vn b\times\vn p=0; 2s\vn b=\vec 0; \vn b\times\vn s=0 \right\rbrace}{\displaystyle\left\lbrace\left(0, 0, D'(\vec C)=-\vn\left(\vn b\times\vec C\right)+\Div(\vec C)\vn b\right)\mid \vec C\in \A'^2\right\rbrace}.
\end{eqnarray*}
Let $(p, s, \vec R)\in Z^2_o(\psi_b)$. As $s\, \vn b=0$, we have $s=0$. Moreover, the equation $\vn b\times\vn p=0$ and proposition \ref{prp:H0o} imply that $p\in \k[b]$.
This shows that one can write
\begin{eqnarray*}
H^2_o(\psi_b) &\simeq& \k[b]\px\py\oplus \frac{\displaystyle\left\lbrace \vec R\in \A'^2 \right\rbrace}{\displaystyle\left\lbrace-\vn\left(\vn b\times\vec C\right)+\Div(\vec C)\vn b\mid \vec C\in \A'^2\right\rbrace}.
\end{eqnarray*}
Let $\vec F\in \A'^2$ be a homogeneous element, that is $\vec F=\left(\begin{smallmatrix}f_1\\ f_2\end{smallmatrix}\right)$ such that $f_1$ and $f_2$ are homogeneous polynomials of the same degree $\deg(f_1)=\deg(f_2)=d\in \N$.

Because $\Div\left(\Div(\vec F)\vec E\right)=-(d+1)\Div(\vec F)$ and because the de Rham complex is exact, there exists $k\in \A'$ such that
$$
\vec F=\frac{-1}{d+1}\Div(\vec F)\vec E+\vn k.
$$
 According to lemma \ref{lma:sqfree2}, there exist $h\in \A'$ and $\lambda_{i,j}\in \k$, for $i\in \N$ and $0\leq j\leq \mu-1$, satisfying
$$
k = \vn h\times \vn b+\sum_{j=0}^{\mu-1}\sum_{i\in \N}\lambda_{i,j}\; b^i\; u_j,
$$
where for each $0\leq j\leq \mu-1$, only a finite number of the $\lambda_{i,j}$ are non-zero.
Then,
\begin{eqnarray*}
\vn k &=&\vn\left( \vn h\times \vn b\right)+\sum_{j=1}^{\mu-1}\sum_{i\in \N}\lambda_{i,j}\; b^i\; \vn u_j
+\sum_{j=0}^{\mu-1}\sum_{i\in \N^*}\lambda_{i,j}\;i\; b^{i-1}\; u_j\;\vn b,\\
   &=& D'(\vn h)+\sum_{j=1}^{\mu-1}\sum_{i\in \N}\lambda_{i,j}\;  b^i\; \vn u_j
+\sum_{j=0}^{\mu-1}\sum_{i\in \N^*}\lambda_{i,j}\;i\; b^{i-1}\; u_j\; \vn b.
\end{eqnarray*}
Moreover, applying successively two times lemma \ref{lma:sqfree2},  we obtain the existence of $h', \ell\in\A'$ and some constants $\alpha_{i,j}, \gamma_{i,j}\in \k$, for $i\in \N$ and $0\leq j\leq \mu-1$ (with, for each $j$, only a finite number of non-zero $\alpha_{i,j}$ and $\gamma_{i,j}$) such that:
\begin{eqnarray*}
\Div(\vec F) &=& \vn h'\times\vn b + \sum_{j=0}^{\mu-1}\sum_{i\in \N}\alpha_{i,j}\;b^i\; u_j,\\
h' &=& \vn \ell\times\vn b + \sum_{j=0}^{\mu-1}\sum_{i\in \N}\gamma_{i,j}\;b^i\; u_j.
\end{eqnarray*}
We compute $D'\left(\ell\vn b\right)=\Div\left(\ell\vn b\right)\vn b=\left(\vn \ell\times\vn b\right)\vn b$, so that
$$
h'\vn b = D'\left(\ell\vn b\right) + \sum_{j=0}^{\mu-1}\sum_{i\in \N}\gamma_{i,j}\; b^i\; u_j\; \vn b.
$$
According to (\ref{eq:D'(HE)}), $\left(\vn h'\times \vn b\right)\vec E = D'\left(h'\vec E\right) - (\deg(b)-2) h'\vn b$.
This permits us to write:
\begin{eqnarray*}
\Div(\vec F)\vec E &=&D'\left(h'\vec E\right) - (\deg(b)-2)  D'\left(\ell\vn b\right) \\
&&- \sum_{j=0}^{\mu-1}\sum_{i\in \N} (\deg(b)-2)\gamma_{i,j}\;b^i\; u_j\; \vn b\\
&& + \sum_{j=0}^{\mu-1}\sum_{i\in \N}\alpha_{i,j}\; b^i \;u_j\;\vec E.
\end{eqnarray*}
We finally obtain:
\begin{eqnarray*}
\vec F &=& \frac{-1}{d+1}\Div(\vec F)\vec E+\vn k\\
           &=&  D'\left(\frac{-1}{d+1}h'\vec E+\frac{(\deg(b)-2)}{d+1}\ell\vn b+\vn h\right) \\
           &+& \sum_{j=0}^{\mu-1}\sum_{i\in \N}  \frac{(\deg(b)-2)}{d+1}\gamma_{i,j}\;b^i\; u_j\; \vn b
       + \sum_{j=0}^{\mu-1}\sum_{i\in \N} \frac{-1}{d+1}\alpha_{i,j}\;b^i\; u_j\;\vec E \\
      &+&\sum_{j=1}^{\mu-1}\sum_{i\in \N}\lambda_{i,j}\;b^i\; \vn u_j
+\sum_{j=0}^{\mu-1}\sum_{i\in \N^*}\lambda_{i,j}\;i \;b^{i-1}\; u_j\;\vn b\\
&\in & \im(D') +\sum_{j=0}^{\mu-1}\k[b]u_j\, \vn b + \sum_{j=0}^{\mu-1}\k[b] \;u_j\;\vec E
+\sum_{j=1}^{\mu-1}\k[b]\; \vn u_j.
\end{eqnarray*}
Now, using equation (\ref{eq:D'(HE)1}) (with $h=u_j$) and the fact that $D'(b\vec Q)=bD'(\vec Q)$, for all $\vec Q\in \A'^2$, we obtain
$$
b^i\vn u_j=D'\left(\frac{b^{i-1}}{\deg(b)}u_j\vec E\right),\quad \hbox{ if } i\geq 1 \hbox{ and if } \deg(u_j)=\deg(b)-2,
$$
hence
\begin{eqnarray*}
\sum_{j=1}^{\mu-1}\k[b]\; \vn u_j &\in& \im(D')
\;+ \sum_{\stackrel{1\leq j\leq \mu-1}{\deg(u_j)=\deg(b)-2}}\k\; \vn u_j\\
&&+ \sum_{\stackrel{1\leq j\leq \mu-1}{\deg(u_j)\not=\deg(b)-2}}\k[b]\;\vn u_j.
\end{eqnarray*}
Moreover, if $\deg(u_j)\not=\deg(b)-2$, (\ref{eq:D'(HE)1}) implies
$$
u_j\vn b = \frac{1}{\deg(b)-\deg(u_j)-2}D'\left(u_j\vec E\right)-  \frac{\deg(b)}{\deg(b)-\deg(u_j)-2}b\vn u_j,
$$
hence,
\begin{eqnarray*}
\sum_{j=0}^{\mu-1}\k[b]\; u_j\; \vn b &\in &\im(D')
\;+ \sum_{\stackrel{0\leq j\leq \mu-1}{\deg(u_j)=\deg(b)-2}}\k[b]\; u_j\;\vn b\\
&&+ \sum_{\stackrel{1\leq j\leq \mu-1}{\deg(u_j)\not=\deg(b)-2}}\k[b]\;\vn u_j.
\end{eqnarray*}
This leads to:
\begin{eqnarray*}
\vec F &\in &\im(D')
\;+ \sum_{\stackrel{0\leq j\leq \mu-1}{\deg(u_j)=\deg(b)-2}}\k[b]\; u_j\;\vn b\\
&&+ \sum_{\stackrel{1\leq j\leq \mu-1}{\deg(u_j)\not=\deg(b)-2}}\k[b]\; \vn u_j
\;\;+\sum_{0\leq j\leq \mu-1}\k[b] \; u_j\;\vec E\\
&& + \sum_{\stackrel{1\leq j\leq \mu-1}{\deg(u_j)=\deg(b)-2}}\k\; \vn u_j
\end{eqnarray*}
and
\begin{eqnarray*}
\lefteqn{\frac{\A'^2}{\displaystyle\left\lbrace D'(\vec C)\mid \vec C\in \A'^2\right\rbrace}\simeq \sum_{\stackrel{0\leq j\leq \mu-1}{\deg(u_j)=\deg(b)-2}}\k[b]\; u_j\;\vn b}\\
&&\qquad \qquad + \sum_{\stackrel{1\leq j\leq \mu-1}{\deg(u_j)\not=\deg(b)-2}}\k[b]\;\vn u_j
 +\sum_{0\leq j\leq \mu-1}\k[b] \; u_j\;\vec E\\
 &&\qquad \qquad + \sum_{\stackrel{1\leq j\leq \mu-1}{\deg(u_j)=\deg(b)-2}}\k\; \vn(u_j).
\end{eqnarray*}
If $\deg(b)=1$, then $\mu=0$ and this vector space is $\{0\}$.
It  now remains to show that this sum is a direct one, in the case $\deg(b)\geq 2$. To do this, let us suppose that there exist some constants $e_{k,\ell}, m_{s,t}, c_{r,j}, a_i\in \k$, for $0\leq \ell, j \leq \mu-1$, $1\leq t, i\leq \mu-1$, $k, s, r\in \N$, and there exists $\vec Q\in \A'^2$, satisfying:
\begin{eqnarray}\label{eq:directsumH2c}
\lefteqn{\Div(\vec Q)\vn b-\vn \left(\vn b\times \vec Q\right)=}\nonumber\\
&&
\sum_{\stackrel{k\in \N, 0\leq \ell\leq \mu-1}{\deg(u_\ell)=\deg(b)-2}}e_{k,\ell} \;b^k\; u_\ell\; \vn b
+ \sum_{\stackrel{s\in \N, 1\leq t\leq \mu-1}{\deg(u_t)\not=\deg(b)-2}} m_{s,t}\; b^s\; \vn u_t\\
&&+ \sum_{r\in \N, 0\leq j\leq \mu-1} c_{r,j}\; b^r\; u_j\;\vec E
+ \sum_{\stackrel{1\leq i\leq \mu-1}{\deg(u_i)=\deg(b)-2}} a_i\; \vn u_i,\nonumber
\end{eqnarray}
(where, as usual, all the sums are supposed to be finite). By computing the divergence of these terms and because, for every $k, \ell\in \A'$, one has $\Div(k\vn \ell)=\vn k\times\vn \ell$, one obtains that:
$$
 \sum_{r\in \N, 0\leq j\leq \mu-1} c_{r,j}\left(r\deg(b)+\deg(u_j)+2\right)\; b^r\; u_j \in\lbrace \vn h\times\vn b\mid h\in \A'\rbrace.
$$
Together with (\ref{eq:sqfree2}), this implies that $c_{r,j}=0$, for all $r,j$.

%
%
Now, by computing the cross product of (\ref{eq:directsumH2c}) and $\vec E$, we obtain that:
$$
\sum_{\stackrel{1\leq t\leq \mu-1}{\deg(u_t)\not=\deg(b)-2}} m_{0,t}\deg(u_t)\;  u_t
+\sum_{\stackrel{1\leq i\leq \mu-1}{\deg(u_i)=\deg(b)-2}}\deg(u_i) a_i\; u_i \in \langle b_x, b_y\rangle,
$$
so that, by definition of the $u_i$, this equality implies that $m_{0, t}=0$ and $a_i=0$, for all $t$ and $i$. It now remains:
\begin{eqnarray*}
\lefteqn{\Div(\vec Q)\vn b-\vn \left(\vn b\times \vec Q\right)}\\
&=&\sum_{\stackrel{k\in \N, 0\leq \ell\leq \mu-1}{\deg(u_\ell)=\deg(b)-2}}e_{k,\ell} \;b^k\; u_\ell\; \vn b
+ \sum_{\stackrel{s\in \N^*, 1\leq t\leq \mu-1}{\deg(u_t)\not=\deg(b)-2}} m_{s,t}\; b^s\; \vn u_t\\
&=&\sum_{\stackrel{k\in \N, 0\leq \ell\leq \mu-1}{\deg(u_\ell)=\deg(b)-2}}e_{k,\ell} \;b^k\; u_\ell\; \vn b
+ \sum_{\stackrel{s\in \N^*, 1\leq t\leq \mu-1}{\deg(u_t)\not=\deg(b)-2}} m_{s,t}\; \vn\left(b^s\;  u_t\right)\\
&&- \sum_{\stackrel{s\in \N^*, 1\leq t\leq \mu-1}{\deg(u_t)\not=\deg(b)-2}}s\, m_{s,t}\; u_t\; b^{s-1}\; \vn b.
\end{eqnarray*}
This implies
\begin{eqnarray*}
\lefteqn{\vn\left(\sum_{\stackrel{s\in \N^*, 1\leq t\leq \mu-1}{\deg(u_t)\not=\deg(b)-2}} m_{s,t}\; b^s\;  u_t
+\left(\vn b\times \vec Q\right)\right)}\\
&=&\left(-\sum_{\stackrel{k\in \N, 0\leq \ell\leq \mu-1}{\deg(u_\ell)=\deg(b)-2}}e_{k,\ell} \;b^k\; u_\ell
+\sum_{\stackrel{s\in \N^*, 1\leq t\leq \mu-1}{\deg(u_t)\not=\deg(b)-2}}s\, m_{s,t}\; u_t\; b^{s-1}\right.\\
&&\\
&&+\Div(\vec Q)\Bigg)\vn b,
\end{eqnarray*}
so that the element $\sum m_{s,t}\; b^s\;  u_t
+\left(\vn b\times \vec Q\right)$ is a Casimir (element of $Z^0_o(\psi_b)$) and, according to proposition \ref{prp:H0o}, there exist some constants $\alpha_v\in\k$, for $v\in \N$, such that
\begin{equation}\label{eq:suite1}
\sum_{\stackrel{s\in \N^*, 1\leq t\leq \mu-1}{\deg(u_t)\not=\deg(b)-2}} m_{s,t}\; b^s\;  u_t
+\left(\vn b\times \vec Q\right) = \sum_{v\in \N^*}\alpha_v \; b^v
\end{equation}
(here $\alpha_0=0$, because $\deg(b)\geq 2$ and for example using (\ref{eq:finitedim}))
and
\begin{eqnarray}\label{eq:suite2}
\lefteqn{-\sum_{\stackrel{k\in \N, 0\leq \ell\leq \mu-1}{\deg(u_\ell)=\deg(b)-2}}e_{k,\ell} \;b^k\; u_\ell
+\sum_{\stackrel{s\in \N^*, 1\leq t\leq \mu-1}{\deg(u_t)\not=\deg(b)-2}}s\, m_{s,t}\; u_t\; b^{s-1}
+\Div(\vec Q)} \nonumber\\
&& \qquad \qquad \qquad \qquad \qquad \qquad \qquad \qquad\qquad = \sum_{v\in \N^*}v\,\alpha_v \; b^{v-1}.
\end{eqnarray}
Using Euler's formula in equation (\ref{eq:suite1}) to write $b=\frac{1}{\deg(b)}\vec E\times\vn b$ and secondly the exactness of the Koszul complex, we obtain the existence of an element $d\in \A'$ satisfying:
$$
\sum_{\stackrel{s\in \N^*, 1\leq t\leq \mu-1}{\deg(u_t)\not=\deg(b)-2}} \frac{m_{s,t}}{\deg(b)}\; b^{s-1}\;  u_t\; \vec E - \vec Q = \sum_{v\in \N^*}\frac{\alpha_v}{\deg(b)} \; b^{v-1}\; \vec E + d\vn b.
$$
Computing the divergence in this last equation leads to
\begin{eqnarray}\label{eq:suitediv}
\lefteqn{\sum_{\stackrel{s\in \N^*, 1\leq t\leq \mu-1}{\deg(u_t)\not=\deg(b)-2}}
 \frac{m_{s,t}}{\deg(b)}\left((s-1)\deg(b)+\deg(u_t)+2\right)\; b^{s-1}\;  u_t + \Div(\vec Q)} \nonumber\\
  &&\qquad\qquad\quad= \sum_{v\in \N^*}\frac{\alpha_v}{\deg(b)}\left((v-1)\deg(b)+2\right) \; b^{v-1} - \vn d\times\vn b.
\end{eqnarray}
Now, (\ref{eq:suite2}), together with (\ref{eq:suitediv}), give
\begin{eqnarray*}
\lefteqn{\Div(\vec Q) = }\\
&-&\sum_{\stackrel{s\in \N^*, 1\leq t\leq \mu-1}{\deg(u_t)\not=\deg(b)-2}}
 \frac{m_{s,t}}{\deg(b)}\left((s-1)\deg(b)+\deg(u_t)+2\right)\; b^{s-1}\;  u_t\\
 &+&\sum_{v\in \N^*}\frac{\alpha_v}{\deg(b)}\left((v-1)\deg(b)+2\right) \; b^{v-1} - \vn d\times\vn b=\\
&&\!\!\!\!\!\!\!\!\sum_{\stackrel{k\in \N, 0\leq \ell\leq \mu-1}{\deg(u_\ell)=\deg(b)-2}}e_{k,\ell} \;b^k\; u_\ell
-\sum_{\stackrel{s\in \N^*, 1\leq t\leq \mu-1}{\deg(u_t)\not=\deg(b)-2}}s\, m_{s,t}\; u_t\; b^{s-1}
+\sum_{v\in \N^*}v\,\alpha_v \; b^{v-1},
\end{eqnarray*}
which gives us:
\begin{eqnarray*}
\lefteqn{\sum_{\stackrel{s\in \N^*, 1\leq t\leq \mu-1}{\deg(u_t)\not=\deg(b)-2}}
 \frac{m_{s,t}}{\deg(b)}\left(-\deg(b)+\deg(u_t)+2\right)\; b^{s-1}\;  u_t}\\
 &+&\sum_{\stackrel{k\in \N, 0\leq \ell\leq \mu-1}{\deg(u_\ell)=\deg(b)-2}}e_{k,\ell} \;b^k\; u_\ell -\sum_{v\in \N^*}\frac{\alpha_v}{\deg(b)}\left(-\deg(b)+2\right) \; b^{v-1}\\
 &=&  - \vn d\times\vn b.
\end{eqnarray*}
Now, by lemma \ref{lma:sqfree2}, this leads to $e_{k,\ell}=0$, $m_{s,t}=0$ and  $\alpha_v=0$, for all $k, \ell, s, t$ and $v$.
This permits us to conclude that the sum is a direct sum and permits us to obtain the desired result.
\qed

Let us now determine the even Poisson cohomology associated to the Poisson structure
$$
\psi_b =b_y\,\theta\px\pt - b_x\, \theta\py\pt.
$$
 First of all, we give the $0$-th even Poisson cohomology space.
\begin{prp}\label{prp:H0even}
Let $b\in \A'$ be an non-constant polynomial of $\A'=\k[x,y]$.
The $0$-th even Poisson cohomology space associated to $\psi_b$ is zero:
$$
H^0_e(\psi_b)\simeq\lbrace 0\rbrace.
$$
\end{prp}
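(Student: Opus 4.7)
The plan is to unwind the definitions using the identifications already established in the paper. Under the identification $C^0_e \simeq \A'$ that sends $a(x,y)\theta \mapsto a(x,y)$, the paper has already recorded the formula
\begin{equation*}
D_{\psi_b}(a) = (0,\, a\vn b) \in \A' \times \A'^2 \simeq C^1_o(\A).
\end{equation*}
Since there are no cochains of degree $-1$, we have $B^0_e = 0$, so the claim reduces entirely to showing that the only cocycle is zero, i.e.\ that $a\vn b = \vec 0$ forces $a=0$.

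The key step is immediate: the hypothesis that $b$ is non-constant means that at least one of $b_x$, $b_y$ is a nonzero element of the integral domain $\A' = \k[x,y]$. The equation $a\vn b = \vec 0$ says $ab_x = 0$ and $ab_y = 0$, and since $\A'$ has no zero divisors, this forces $a = 0$. Hence $Z^0_e(\psi_b) = 0$, and therefore $H^0_e(\psi_b) \simeq \{0\}$. No obstacle is anticipated; unlike the statements for $H^n_o$ there is no need to invoke the Koszul complex, the square-freeness of $b$, or the Milnor algebra — only that $b$ is non-constant and $\A'$ is a domain.
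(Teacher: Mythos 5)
Your argument is correct and follows the paper's own proof essentially verbatim: identify $C^0_e$ with $\A'$, note that $H^0_e(\psi_b)\simeq\lbrace a\in\A'\mid a\vn b=\vec 0\rbrace$, and use that $b$ non-constant together with $\A'$ being a domain forces $a=0$. You simply spell out the zero-divisor step that the paper leaves implicit, so nothing further is needed.
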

{\bf Proof.}
Under the identifications of the cochain spaces we can write:
$$
H^0_e(\psi_b)\simeq\lbrace a\in \A'\mid a\vn b=\vec0\rbrace.
$$
As $b$ is supposed to be non-constant, this gives $H^0_e(\psi_b)\simeq\lbrace 0\rbrace$.
\qed\\
\begin{prp}\label{prp:H1even}
Let $b\in \A'$ be a homogeneous and non-constant polynomial of $\A'=\k[x,y]$.
If $b$ is square-free then a basis of the first even Poisson cohomology space associated to $\psi_b$ is given by:
\begin{eqnarray*}
H^1_e(\psi_b) &\simeq&\left\lbrace\begin{array}{lcl} (0,\k[b]^2) &\hbox{ if }& \deg(b)= 1,\\
\bigoplus\limits_{i=0}^{\mu-1}\k[b]\; \left(u_i, \vec 0\right)
\;\oplus\; \k[b]\, \left(0, \vn b\right)&&\\
\qquad\oplus\k[b]\, \left(0, \vec E\right) &\hbox{ if }& \deg(b)= 2,\\
\bigoplus\limits_{i=0}^{\mu-1}\k[b]\; \left(u_i, \vec 0\right)
\;\oplus\; \k[b]\, \left(0, \vn b\right) &\hbox{ if }& \deg(b)> 2.
\end{array}
\right.
%
%
 %
 \\
 &&\\
  &\simeq& \left\lbrace\begin{array}{lcl} \k[b]\px\oplus\k[b]\py &\hbox{ if }& \deg(b)= 1,\\
\bigoplus\limits_{i=0}^{\mu-1}\k[b]\; u_i\; \theta\pt \;\oplus\; \k[b]\, \left(-b_y\px+b_x\py\right)&&\\
\qquad\oplus\k[b]\, \left(x\px+y\py\right) &\hbox{ if }& \deg(b)= 2,\\
\bigoplus\limits_{i=0}^{\mu-1}\k[b]\; u_i\; \theta\pt \;\oplus\; \k[b]\, \left(-b_y\px+b_x\py\right) &\hbox{ if }& \deg(b)> 2.
\end{array}
\right.
%
 %
 %
\end{eqnarray*}
\end{prp}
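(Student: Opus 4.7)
The plan is to exploit the fact that the coboundary $D_{\psi_b}:C^1_e\to C^2_o$ depends only on the $\vec Q$ coordinate of $(r,\vec Q)\in\A'\times\A'^2\simeq C^1_e$, and that coboundaries coming from $C^0_o=\A'$ touch only the $r$ coordinate, so that $H^1_e$ splits cleanly as a direct sum. From the formulas recalled before the statement, $(r,\vec Q)$ is a cocycle iff $D'(\vec Q):=\vn(\vec Q\times\vn b)+\Div(\vec Q)\vn b=\vec 0$, with no constraint on $r$, while the coboundaries form $\{(-\vn c\times\vn b,\vec 0):c\in\A'\}$. Hence
\begin{equation*}
H^1_e(\psi_b)\;\simeq\;\frac{\A'}{\{\vn c\times\vn b\mid c\in\A'\}}\;\oplus\;\ker D',
\end{equation*}
and the first summand is identified with $\bigoplus_{i=0}^{\mu-1}\k[b]\,u_i$ by Lemma \ref{lma:sqfree2}, producing the $\bigoplus\k[b](u_i,\vec 0)$ piece of the answer.

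The bulk of the work is computing $\ker D'$. First I check that $k(b)\vn b\in\ker D'$ for every $k\in\k[x]$ (using $\vn b\times\vn b=0$ and $\Div(k(b)\vn b)=0$), and that $D'(k(b)\vec E)=(\deg b-2)\,k(b)\vn b$ by Remark \ref{rem:D'(HE)}, so $k(b)\vec E\in\ker D'$ precisely when $\deg b=2$.

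To show these exhaust $\ker D'$ when $\deg b\geq 2$, take a homogeneous $\vec Q\in\ker D'$ of degree $d$ and put $f:=\vec Q\times\vn b$. The cocycle condition gives $\vn f=-\Div(\vec Q)\vn b$, hence $\vn f\times\vn b=0$, and Proposition \ref{prp:H0o} forces $f\in\k[b]$; by homogeneity either $f=0$ or $f=\alpha b^m$ with $m\geq 1$. In the first case, Koszul exactness yields $\vec Q=a\vn b$, and $D'(a\vn b)=(\vn a\times\vn b)\vn b=0$ then forces $a\in\k[b]$, contributing only $\k[b]\vn b$. In the second case, setting $\vec Q_0:=\tfrac{\alpha}{\deg b}b^{m-1}\vec E$ yields $\vec Q_0\times\vn b=\alpha b^m$ by Euler, so $\vec Q=\vec Q_0+a\vn b$ for some $a$, and Remark \ref{rem:D'(HE)} combined with $D'(a\vn b)=(\vn a\times\vn b)\vn b$ reduces $D'(\vec Q)=0$ to
\begin{equation*}
\vn a\times\vn b\;=\;-\tfrac{\alpha(\deg b-2)}{\deg b}\,b^{m-1}.
\end{equation*}
Since $b^{m-1}\in\k[b]=\k[b]\,u_0$ lies in the complement of $\{\vn h\times\vn b\}$ provided by Lemma \ref{lma:sqfree2}, this equation is solvable iff $\alpha(\deg b-2)=0$: for $\deg b>2$ this forces $\alpha=0$, so only $\k[b]\vn b$ survives, while for $\deg b=2$ the equation reads $a\in\k[b]$ and homogeneity yields $a=\beta b^{m-1}$, whence $\vec Q\in\k[b]\vec E\oplus\k[b]\vn b$.

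The case $\deg b=1$ needs separate treatment because the ansatz $\vec Q_0=\tfrac{\alpha}{\deg b}b^{m-1}\vec E$ misses the constant $\vec Q$'s and Lemma \ref{lma:sqfree2} degenerates ($\mu=0$). A linear change of variables reduces to $b=x$, after which a one-line computation gives $D'(p,q)=(-p_y,-q_y)$, so $\ker D'=\k[x]^2=\k[b]^2$. The directness of the remaining sums then follows from comparing degrees together with the decomposition in Lemma \ref{lma:sqfree2}, and converting $(r,\vec Q)$ back into biderivations through the identification $C^1_e\simeq\A'\times\A'^2$ produces the explicit bases stated. The main technical obstacle is the second case above: one must set up the reduction $\vec Q=\vec Q_0+a\vn b$ so that solvability collapses to a single equation in $\A'$, at which point Lemma \ref{lma:sqfree2} isolates the critical degree $\deg b=2$ in one step.
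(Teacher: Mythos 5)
Your proof is correct and follows essentially the same route as the paper: splitting $H^1_e$ into the quotient $\A'/\{\vn h\times\vn b\}$ (handled by Lemma \ref{lma:sqfree2}) plus the kernel of $\vec Q\mapsto \vn(\vec Q\times\vn b)+\Div(\vec Q)\vn b$, then using Proposition \ref{prp:H0o}, Euler's formula and Koszul exactness to write $\vec Q=\tfrac{\alpha}{\deg b}b^{v-1}\vec E+q\vn b$ and letting Lemma \ref{lma:sqfree2} isolate the dichotomy $\deg b=2$ versus $\deg b\neq 2$, with $\deg b=1$ treated separately. The only differences (separating the cases $f=0$ and $f\neq0$, and reducing $\deg b=1$ to $b=x$ by a linear change of variables rather than computing directly) are cosmetic.
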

{\bf Proof.}
Let us recall that one can write
$$
H^1_e(\psi_b)\simeq \frac{\left\lbrace (r,\vec Q)\in \A'\times\A'^2\mid \vn \left(\vec Q\times\vn b\right)+\Div(\vec Q)\vn b=\vec 0\right\rbrace}{\left\lbrace\left(\vn b\times \vn c, \vec 0\right)\mid c\in \A'\right\rbrace}.
$$
Let $(r, \vec Q)\in Z^1_e(\psi_b)$ be an even $1$-cocycle. The element $\vec Q$ then satisfies the equation:
\begin{equation}\label{eq:even1cocycle}
 \vn \left(\vec Q\times\vn b\right)+\Div(\vec Q)\vn b=\vec 0.
 \end{equation}
First of all, suppose that $\deg(b)=1$. In this case, writing $\vec Q=\left(\begin{smallmatrix}Q_1\\ Q_2\end{smallmatrix}\right)$, one computes that
$$
\vec 0 = \vn \left(\vec Q\times\vn b\right)+\Div(\vec Q)\vn b=-\left(\begin{array}{cc}\vn b\times \vn Q_1\\\vn b\times \vn Q_2\end{array}\right).
$$
According to proposition \ref{prp:H0o}, this is equivalent to $Q_1\in \k[b]$ and $Q_2\in \k[b]$. Finally, because $\deg(b)=1$, one has $\mu=0$, so that lemma \ref{lma:sqfree2} proves that $H^1_e(\psi_b) \simeq (0,\k[b]^2)$ if $\deg(b)= 1$.

Now, suppose $\deg(b)\geq 2$.
 Because the operator $\vec Q\mapsto  \vn \left(\vec Q\times\vn b\right)+\Div(\vec Q)\vn b$ is homogeneous, we can suppose that $\vec Q$ is homogeneous, which means that $\vec Q$ is given by two homogeneous polynomials of the same degree.
  Equation (\ref{eq:even1cocycle})  implies that the element $\vec Q\times\vn b\in \A'$ is an odd $0$-cocycle, so that, according to proposition \ref{prp:H0o}, there exists a constant $\alpha\in \k$ and $v\in \N$ such that
\begin{equation}\label{eq:even1cocycle2}
\vec Q\times\vn b = \alpha\; b^v = \frac{\alpha}{\deg(b)}\; b^{v-1}\;\vec E\times\vn b,
\end{equation}
(notice that $v\not=0$, because $\deg(b)\geq 2$ and because of (\ref{eq:finitedim}) for example).
Because the Koszul complex is exact, this implies that there exists an element $q\in \A'$ satisfying
$$
\vec Q= \frac{\alpha}{\deg(b)}\; b^{v-1}\;\vec E + q\vn b.
$$
Now, we compute
$$
\Div(\vec Q) =  \frac{-\alpha((v-1)\deg(b)+2)}{\deg(b)}\; b^{v-1} + \vn q\times\vn b,
$$
which, together with (\ref{eq:even1cocycle}) and (\ref{eq:even1cocycle2}), lead to:
\begin{equation}\label{eq:evenH1comp}
\frac{-\alpha(-\deg(b)+2)}{\deg(b)}\; b^{v-1} + \vn q\times\vn b=0.
\end{equation}
Now, two cases have to be studied: whether $\deg(b)=2$ or $\deg(b)\not=2$.

We first assume that $b$ is a polynomial of degree $2$. Then, equation (\ref{eq:evenH1comp}) becomes simply $\vn q\times\vn b=0$, which, in view of proposition \ref{prp:H0o} implies that $q\in \k[b]$ and in this case $\vec Q= \frac{\alpha}{\deg(b)}\; b^{v-1}\;\vec E + q\vn b\in \k[b]\vec E+\k[b]\vn b$, so that we have shown that, if $\deg(b)=2$, then
$$
H^1_e(\psi_b)\subseteq \frac{\A'}{\left\lbrace\vn b\times \vn c\mid c\in \A'\right\rbrace}\;(1,\vec 0)\oplus  \left(\k[b](0,\vec E)+\k[b](0,\vn b)\right).
$$
Conversely, it is easy to see that $\k[b](0,\vec E)+\k[b](0,\vn b)\subseteq Z^1_e(\psi_b)$ in the case $\deg(b)=2$ and that the sum is a direct one, so that, according to lemma \ref{lma:sqfree2}, we can conclude that
$$
H^1_e(\psi_b)\simeq \bigoplus_{i=0}^{\mu-1}\k[b]\; u_i\;(1,\vec 0)\oplus  \k[b](0,\vec E) \oplus\k[b](0,\vn b), \quad \hbox{ if } \deg(b)=2.
$$

Let us now consider the last case where $\deg(b)\not=2$. In this case, the equation (\ref{eq:evenH1comp})
$\frac{\alpha(2-\deg(b))}{\deg(b)}\; b^{v-1} = \vn q\times\vn b$, together with lemma \ref{lma:sqfree2}, imply in particular (because $1=u_0$) that $\alpha=0$ and $\vn b\times\vn q=0$, which as above leads to $q\in \k[b]$. In this case, we then have shown that:
$$
H^1_e(\psi_b)\simeq \bigoplus_{i=0}^{\mu-1}\k[b]\; u_i\;(1,\vec 0) \oplus\k[b](0,\vn b), \quad \hbox{ if } \deg(b)\not=2.
$$
\qed\\
\begin{prp}\label{prp:H2even}
Let $b\in \A'$ be a homogeneous and non-constant polynomial of $\A'=\k[x,y]$ and $\psi_b$ be the Poisson structure as above.
If $b$ is square-free then a basis of the $2$-nd even Poisson cohomology space associated to $\psi_b$ is given by:
\begin{eqnarray*}
H^2_e(\psi_b) &\simeq&
\bigoplus_{i=0}^{\mu-1}\k\; \left(u_i, 0, \vec 0\right) \;\; \left(\simeq \A'_{sing}(b)\right)\\
&&\\
  &\simeq& \bigoplus_{i=0}^{\mu-1}\k\; u_i\; \theta\px\py.
\end{eqnarray*}
\end{prp}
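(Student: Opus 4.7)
The plan is to use the explicit formula for $D_{\psi_b}$ on $C^2_e$ and $C^1_o$ given earlier, together with the exactness of the Koszul complex, to reduce each cohomology class to a single representative $(a,0,\vec 0)$ with $a \in \A'_{sing}(b)$.

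First I would write down the cocycle condition explicitly. For $\varphi_2 = (a,d,\vec C) \in \A'\times\A'\times\A'^2 \simeq C^2_e(\A)$, the identity
\[
D_{\psi_b}(\varphi_2) = \bigl(\vn b\times\vec C,\; 0,\; d\,\vn b+\vn(\vec C\times\vn b)+\Div(\vec C)\vn b\bigr)
\]
gives two equations: $\vn b\times\vec C = 0$ and $d\,\vn b+\vn(\vec C\times\vn b)+\Div(\vec C)\vn b = \vec 0$. By exactness of the Koszul complex (since $b$ is homogeneous and square-free), the first equation yields $\vec C = q\,\vn b$ for some $q\in\A'$. Then $\vec C\times\vn b = 0$ and $\Div(\vec C) = \vn q\times\vn b$, so the second equation collapses to $(d + \vn q\times\vn b)\vn b = \vec 0$; since $\vn b\ne\vec 0$ (because $b$ is non-constant), this forces $d = -\vn q\times\vn b$. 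Hence every even $2$-cocycle has the form $(a,\, -\vn q\times\vn b,\, q\vn b)$ for some $a\in\A'$ and $q\in\A'$.

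Next I would exploit the coboundary formula for $\alpha_1 = (r,\vec Q) \in C^1_o(\A)$, namely
\[
D_{\psi_b}(r,\vec Q) = \bigl(-\vn b\times\vec Q,\; \vn b\times\vn r,\; r\,\vn b\bigr).
\]
Given a cocycle $(a,\,-\vn q\times\vn b,\,q\vn b)$, take $r := q$ (and $\vec Q$ to be chosen). The third and second components then match the coboundary automatically, since $\vn b\times\vn q = -\vn q\times\vn b$ and $q\vn b = q\vn b$. What remains is to adjust the first component: modulo coboundaries, $a$ is only determined up to elements of the form $-\vn b\times\vec Q$, as $\vec Q$ ranges over $\A'^2$. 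A direct check shows $\{-\vn b\times\vec Q \mid \vec Q\in\A'^2\} = \langle b_x, b_y\rangle$, since $\vec Q = (Q_1,Q_2)$ gives $-\vn b\times\vec Q = Q_2 b_x - Q_1 b_y$.

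Putting this together, the map $(a,d,\vec C)\mapsto \overline a \in \A'/\langle b_x,b_y\rangle = \A'_{sing}(b)$ descends to a well-defined isomorphism
\[
H^2_e(\psi_b)\;\xrightarrow{\;\sim\;}\;\A'_{sing}(b)\;=\;\bigoplus_{i=0}^{\mu-1}\k\,u_i,
\]
with inverse sending $u_i$ to the class of $(u_i,0,\vec 0)$, i.e.\ the cochain $u_i\,\theta\px\py$. Surjectivity follows from the analysis above, and injectivity follows from the definition of the $u_i$ as representatives of a basis of $\A'/\langle b_x,b_y\rangle$: if $\sum \lambda_i u_i$ lies in $\langle b_x,b_y\rangle$ then all $\lambda_i$ vanish. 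The computation is largely routine once the cocycle condition has been reduced via Koszul exactness; the only mildly subtle point is recognizing that the choice $r=q$ simultaneously kills both the $d$ and $\vec C$ components, which is what makes the cohomology collapse onto the Milnor algebra rather than producing additional $\k[b]$-module structure as happened in Proposition \ref{prp:H2}.
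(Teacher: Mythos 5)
Your proposal is correct and follows essentially the same route as the paper: use Koszul exactness to write $\vec C = q\vn b$ and deduce $d=-\vn q\times\vn b$, then observe that the coboundary of $(q,\vec Q)$ absorbs the $d$ and $\vec C$ components while shifting $a$ by an arbitrary element of $\langle b_x,b_y\rangle$, so the class of $a$ in the Milnor algebra classifies the cohomology class. (The only slip is the sign in $-\vn b\times\vec Q = b_yQ_1-b_xQ_2$, which is harmless since the set swept out is the ideal $\langle b_x,b_y\rangle$ either way.)
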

{\bf Proof.}
Recall that one can write
\begin{eqnarray*}
\lefteqn{H^2_e(\psi_b)\simeq}\\
&& \!\!\!\!\!\!\!\!\!\!\! \frac{\left\lbrace (a, d, \vec C)\in \A'\times\A'\times\A'^2\mid \vn b\times \vec C=0;
\begin{array}{c}
\renewcommand{\arraystretch}{1.5cm}
d\vn b+\vn\left(\vec C\times\vn b\right)\\
\quad+\Div(\vec C)\vn b=\vec 0\end{array}
\right\rbrace}
{\left\lbrace\left(\vec Q\times\vn b, \vn b\times\vn r, r\vn b\right)\mid (r, \vec Q)\in \A'\times \A'^2\right\rbrace}.
\end{eqnarray*}
Let now $(a, d, \vec C)\in Z^2_e(\psi_b)$ be a even $2$-cocycle. Because $\vn b\times \vec C=0$ and because the Koszul complex is exact, there exists $f\in \A'$ satisfying $\vec C=f\vn b$. Then, the other cocycle condition becomes $d\vn b+\left(\vn f\times \vn b\right)\vn b=\vec 0$, so that $d=-\vn f\times \vn b$. Now, according to (\ref{eq:finitedim}), there exist $\vec Q\in \A'^2$ and some constants $\lambda_i\in \k$, for $0\leq i \leq \mu-1$, such that
$$
a = \vec Q\times \vn b+\sum_{i=0}^{\mu-1}\lambda_i u_i.
$$
We finally have:
\begin{eqnarray*}
(a, d, \vec C) &=& \left(\vec Q\times \vn b,  -\vn f\times \vn b, f\vn b\right) +  \sum_{i=0}^{\mu-1}\lambda_i\left(u_i, 0, 0\right)\\
  &\in& B^2_e(\psi_b) \oplus \bigoplus_{i=0}^{\mu-1}\k\; \left(u_i, 0, 0\right),
\end{eqnarray*}
which gives the result.
\qed\\
\begin{prp}\label{prp:H3even}
Let $b\in \A'$ be a homogeneous and non-constant polynomial of $\A'=\k[x,y]$ and $\psi_b$ be the Poisson structure defined as previously.
If $b$ is square-free then a basis of the third even Poisson cohomology space associated to $\psi_b$ is given by:
\begin{eqnarray*}
H^3_e(\psi_b) &\simeq&
\bigoplus_{i=0}^{\mu-1}\k[b]\; \left(u_i, 0, \vec 0\right)  \;\; \left(\simeq \A'_{sing}(b)\right)\\
&&\\
  &\simeq& \bigoplus_{i=0}^{\mu-1}\k[b]\; u_i\; \theta\px\py\pt.
\end{eqnarray*}
\end{prp}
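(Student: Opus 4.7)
The plan is to follow the same pattern as in Propositions \ref{prp:H2} and \ref{prp:H2even}: first solve the cocycle equations for $D_{\psi_b}$ on $C^3_e$ using the exactness of the Koszul complex, then reduce the result modulo $B^3_e = D_{\psi_b}(C^2_o)$, and finally invoke Lemma \ref{lma:sqfree2} to identify the quotient. Applying the formula for $D_{\psi_b}$ on an even $n$-cochain with $n=3$, a triple $(a,d,\vec C)\in \A'\times\A'\times\A'^2$ represents an even $3$-cocycle precisely when
\begin{equation*}
\vn b\times \vec C = 0 \quad\text{and}\quad 2d\,\vn b + \vn(\vec C\times \vn b) + \Div(\vec C)\,\vn b = \vec 0.
\end{equation*}

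Step one is to simplify these. By exactness of the Koszul complex, $\vn b\times\vec C=0$ forces $\vec C = f\vn b$ for some $f\in \A'$, so $\vec C\times\vn b = 0$ and (using the Leibniz-type identity $\Div(f\vec Q)=\vn f\times\vec Q+f\Div(\vec Q)$ together with $\Div(\vn b)=0$) $\Div(\vec C) = \vn f\times\vn b$. The second condition then reduces to $(2d+\vn f\times\vn b)\,\vn b = \vec 0$, and since $b$ is non-constant and $\A'$ is an integral domain, this forces $d = -\tfrac12\,\vn f\times\vn b$. Consequently
\begin{equation*}
Z^3_e(\psi_b) = \left\{\bigl(a,\;-\tfrac12\vn f\times\vn b,\;f\vn b\bigr)\;\middle|\;a,f\in\A'\right\}.
\end{equation*}

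Step two is to quotient by the coboundaries. The formula for $D_{\psi_b}$ on $C^2_o$ (with $n=2$) yields $B^3_e(\psi_b) = \{(\vn b\times\vn p,\;\vn b\times\vn s,\;2s\vn b)\mid p,s\in\A'\}$. Choosing $s=f/2$ (legitimate in characteristic zero) makes the second and third coordinates of the coboundary match those of the cocycle exactly, since $\vn b\times\vn(f/2) = -\tfrac12\vn f\times\vn b$ and $2(f/2)\vn b = f\vn b$. With $p$ then ranging freely over $\A'$, every cocycle is equivalent modulo $B^3_e$ to one of the form $(a',0,\vec 0)$ where $a'$ is defined only up to an element of $\{\vn p\times\vn b\mid p\in\A'\}$. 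Hence
\begin{equation*}
H^3_e(\psi_b)\;\simeq\;\A'\big/\{\vn p\times\vn b\mid p\in\A'\}.
\end{equation*}

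Step three is to identify this quotient. Lemma \ref{lma:sqfree2} gives exactly the decomposition $\A' = \bigoplus_{i=0}^{\mu-1}\k[b]\,u_i \;\oplus\;\{\vn p\times\vn b\mid p\in\A'\}$, so the quotient is precisely $\bigoplus_{i=0}^{\mu-1}\k[b]\,u_i$, and the representatives $(u_i,0,\vec 0)$ translate back to $u_i\,\theta\px\py\pt$ under the identification of $C^3_e$, which yields both forms of the answer in the statement. The only potential obstacle is verifying that the absorption of the $(d,\vec C)$ components by a coboundary really exhausts those two slots without imposing any hidden constraint on $a$; but once $s=f/2$ is fixed, the parameter $p$ in the coboundary remains entirely free, which is exactly what makes the reduction clean. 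Everything else is direct assembly of previously established facts.
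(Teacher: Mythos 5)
Your proposal is correct and follows essentially the same route as the paper: Koszul exactness to write $\vec C$ as a multiple of $\vn b$, the cocycle equation to pin down $d=-\tfrac12\vn f\times\vn b$, absorption of the $(d,\vec C)$ part into a coboundary via the parameter $s$, and Lemma \ref{lma:sqfree2} applied to the remaining first component. The only (harmless) difference is bookkeeping: you solve for $Z^3_e$ first and quotient at the end, whereas the paper decomposes $a$ via the lemma at the outset, and it writes $\vec C=2f\vn b$ instead of $f\vn b$.
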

\begin{proof}
We have:
\begin{eqnarray*}
\lefteqn{H^3_e(\psi_b)\simeq}\\
&& \!\!\!\!\!\!\!\!\!\!\! \frac{\left\lbrace (a, d, \vec C)\in \A'\times\A'\times\A'^2\mid \vn b\times \vec C=0;
\begin{array}{c}
\renewcommand{\arraystretch}{1.5cm}
2d\vn b+\vn\left(\vec C\times\vn b\right)\\
\quad+\Div(\vec C)\vn b=\vec 0\end{array}
\right\rbrace}
{\left\lbrace\left(\vn b\times\vn p, \vn b\times\vn s, 2s\vn b\right)\mid (p, s)\in \A'\times \A'\right\rbrace}.
\end{eqnarray*}
Assume that an element $ (a, d, \vec C)\in Z^3_e(\psi_b)$ is an even $3$-cocycle. According to lemma \ref{lma:sqfree2}, there exist $p\in \A'$ satisfying $a\in\vn b\times \vn p + \sum_{j=0}^{\mu-1}\k[b]\; u_j$.

Moreover, as we have $\vn b\times \vec C=0$ and because of the exactness of the Koszul complex, there exists $f\in \A'$ such that $\vec C= 2f\vn b$. The other cocycle condition now becomes:
$$
2d\vn b+2\left(\vn f\times \vn b\right)\vn b=\vec 0,\; \hbox{ i.e., } \; d= -\vn f\times \vn b.
$$
Finally, this leads to
\begin{eqnarray*}
(a, d, \vec C) &\in& \left(\vn b\times \vn p, -\vn f\times \vn b, 2f\vn b\right)
+ \sum_{j=0}^{\mu-1}\k[b]\; (u_j, 0, 0)\\
&\in & B^3_e(\psi_b) \oplus\bigoplus_{j=0}^{\mu-1} \k[b]\; (u_j, 0, 0),
\end{eqnarray*}
\end{proof}
Finally, we give the $n$-th Poisson cohomology associated to $\psi_b$, for $n\geq 4$.
\begin{prp}\label{prp:Hneven}
Let $b\in \A'$ be a homogeneous and non-constant polynomial of $\A'=\k[x,y]$ and let $n\in \N$ such that $n\geq 4$. Let $\psi_b$ be the Poisson structure defined by the following
$$
\psi_b =b_y\,\theta\px\pt - b_x\, \theta\py\pt.
$$
If $b$ is square-free then a basis of the $n$-th even Poisson cohomology space associated to $\psi_b$ is given by:
\begin{eqnarray*}
H^n_e(\psi_b) &\simeq&
\bigoplus_{i=0}^{\mu-1}\k\; \left(u_i, 0, \vec 0\right)  \;\; \left(\simeq \A'_{sing}(b)\right)\\
&&\\
  &\simeq& \bigoplus_{i=0}^{\mu-1}\k\; u_i\; \theta\px\py\pt^{n-2}.
\end{eqnarray*}
\end{prp}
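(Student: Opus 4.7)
The plan is to follow the same pattern that worked for Propositions \ref{prp:H2even} and \ref{prp:H3even}, but now with the key observation that when $n\ge 4$, both integers $n-1$ and $n-3$ are nonzero, which will make the analysis cleaner than in the $n=2,3$ cases. Using the identifications, one has
\begin{equation*}
H^n_e(\psi_b)\simeq
\frac{\left\{(a,d,\vec C)\in\A'\times\A'\times\A'^2\,:\,(n-1)\vn b\times\vec C=0,\ (n-1)d\vn b+\vn(\vec C\times\vn b)+\Div(\vec C)\vn b=\vec 0\right\}}
{\left\{\left(\vn b\times\vn p-(n-3)\vec R\times\vn b,\ \vn b\times\vn s,\ (n-1)s\vn b\right)\,:\,(p,s,\vec R)\in C^{n-1}_o\right\}}.
\end{equation*}

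I would first take $(a,d,\vec C)\in Z^n_e(\psi_b)$. Since $n-1\ne 0$, the first cocycle condition reads $\vn b\times\vec C=0$, and by exactness of the Koszul complex there exists $f\in\A'$ with $\vec C=f\vn b$. A direct computation gives $\Div(f\vn b)=\vn f\times\vn b$ and $\vec C\times\vn b=0$, so the second cocycle condition collapses to $(n-1)d\vn b=-(\vn f\times\vn b)\vn b$, that is $d=-\tfrac{1}{n-1}\vn f\times\vn b$. Next I choose the coboundary coming from $\alpha_{n-1}=(0,\tfrac{f}{n-1},\vec 0)\in C^{n-1}_o$: its image under $D_{\psi_b}$ is exactly $\bigl(0,\,-\tfrac{1}{n-1}\vn f\times\vn b,\,f\vn b\bigr)$. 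Subtracting this coboundary from $(a,d,\vec C)$ brings us down to a representative of the form $(a',0,\vec 0)$.

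The remaining step is to identify $(a',0,\vec 0)$ modulo the coboundaries with $s=0$, namely modulo elements $(\vn b\times\vn p-(n-3)\vec R\times\vn b,0,\vec 0)$ for $p\in\A'$ and $\vec R\in\A'^2$. Since $n\ge 4$ we have $n-3\ne 0$, and as $\vec R$ ranges over $\A'^2$, the term $(n-3)\vec R\times\vn b$ ranges over the full ideal $\langle b_x,b_y\rangle$; observing also that $\vn b\times\vn p=b_x p_y-b_y p_x$ lies in $\langle b_x,b_y\rangle$ for every $p$, the set of admissible first-slot coboundaries is exactly $\langle b_x,b_y\rangle$. Consequently the first slot descends to an element of $\A'/\langle b_x,b_y\rangle=\A'_{sing}(b)$, which has basis $u_0,\dots,u_{\mu-1}$ by definition. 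This shows $H^n_e(\psi_b)\simeq\bigoplus_{i=0}^{\mu-1}\k\,(u_i,0,\vec 0)$.

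To finish, I should check linear independence: if $\sum_i\lambda_i(u_i,0,\vec 0)$ is a coboundary, then $\sum_i\lambda_i u_i\in\langle b_x,b_y\rangle$, forcing all $\lambda_i=0$ by the defining property of the $u_i$. The only subtlety worth double-checking is the claim that $n-3\ne 0$ is what unlocks the full ideal in the first slot; for $n=3$ this is exactly what failed and made Proposition \ref{prp:H3even} produce the larger $\k[b]$-module. No new singularity-type obstruction arises in the range $n\ge 4$, so the argument should reduce to the book-keeping above.
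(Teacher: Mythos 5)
Your proposal is correct and follows essentially the same route as the paper, which only sketches this case by referring back to the $H^2_e$ computation: Koszul exactness gives $\vec C=f\vn b$ and forces $d=-\tfrac1{n-1}\vn f\times\vn b$, the cocycle is then normalized by an explicit coboundary, and the first slot is reduced modulo $\langle b_x,b_y\rangle$ using the decomposition (\ref{eq:finitedim}) and the defining property of the $u_i$. Your explicit remark that $n-3\neq0$ is what makes the $\vec R$-term sweep out the whole ideal (in contrast with $n=3$) is exactly the point implicit in the paper's "same arguments" reference.
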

{\bf Proof.}
As previously, we write:
\begin{eqnarray*}
H^n_e(\psi_b)\simeq
\frac{\left\lbrace (a, d, \vec C)\in \A'\times\A'\times\A'^2\mid
\begin{array}{l}
\renewcommand{\arraystretch}{1.5cm}
 \vn b\times \vec C=0;\\
(n-1)d\vn b+\vn\left(\vec C\times\vn b\right)\\
\qquad\qquad  +\Div(\vec C)\vn b=\vec 0\end{array}
\right\rbrace}
{\left\lbrace
\!\!\!\!\!\!\!\!\begin{array}{c}
\renewcommand{\arraystretch}{1.5cm}
\left(\vn b\times\vn p-(n-3)\vec R\times\vn b, \vn b\times\vn s, (n-1)s\vn b\right)\\
\qquad \qquad \qquad \qquad \qquad \qquad \qquad \qquad\mid (p, s, \vec R)\in \A'\times \A'^2
\end{array}
\right\rbrace}.
\end{eqnarray*}
%
%
Now, to determine $H^n_e(\psi_b)$, one uses the same arguments (the exactness of the Koszul complex and the equation (\ref{eq:finitedim})) and a very similar reasoning as for the computation of the space $H^2_e(\psi_b)$.
\qed\\
\begin{rem}
Notice that the determination of the odd and even Poisson cohomology spaces associated to $\psi_b$ could have been done with the hypothesis of $b$ being homogeneous replaced by $b$ being a weight-homogeneous polynomial (i.e., where the two variables $x$ and $y$ are equipped with weights which are not necessarily equal to $1$, see \cite{Pichereau1}).
\end{rem}
\section{Final remarks: codifferentials and $P_\infty$ structures}
\label{sectionPinf}

 A very important notion in mathematics is the one of deformations. Besides the notion of formal deformations mentioned in the introduction of this paper, one can consider the deformations of $\zt$-graded Poisson structures \textit{inside} the space  $\MDer(\A)$ of multiderivations of a $\zt$-graded (polynomial) algebra $\A$. This idea leads to the notion of Poisson infinity structures, which generalizes the notion of $\zt$-graded Poisson structures.
  As usual in deformation theory, this comes with the questions of equivalence of such deformations and links with cohomology.

\subsection{Codifferentials}

In this final section, as in the preliminaries, $\A$ denotes the $\zt$-graded polynomial algebra $\A=\k[x_1,\dots, x_m, \theta_1, \dots, \theta_n]$, where the variables $x_i$ are even while the variables $\theta_j$ are odd.

A \emph{codifferential} on a \zt-graded lie algebra is an element $d\in C_o$ such that $[d,d]=0$. A
\emph{$\pinf$ structure}, or \emph{Poisson infinity structure},
on  $\A$ is a codifferential in $\MDer(\A)$ with respect to the
modified Schouten bracket. In other words, $\psi=\psi^1+\cdots$ where $\psi^k\in C^k_o$
is a \pinf-structure provided that
\begin{equation*}
\sum_{k+l=n+1}\{\psi^k,\psi^l\}=0,\qquad n=1,\dots
\end{equation*}
(Notice that higher Poisson structures, but with a different grading, were already discussed in \cite{Voro}.)

\subsection{Equivalence}
Considering this generalized notion of codifferential also leads to considering the question of equivalence of such codifferentials.
Suppose that $\ph\in C^k_e$ is an even (in the bigraded sense) \zt-graded $k$-derivation. Then
$\exp(\ph):\bigwedge \A\ra \bigwedge \A$ is an automorphism of the tensor co-algebra of $\A$,
which induces an automorphism $\exp(\ph)^*$ of the coderivations of $\bigwedge\A$, which is
given by the formula
\begin{equation*}
\exp(\ph)^*(\alpha)=\exp(-\ph)\circ\alpha\circ\exp(\ph).
\end{equation*}
We have
$\exp(\ph)^*=\exp(-\ad_{\ph})$, where $\ad_{\ph}$ is defined in terms of the modified
bracket.
In other words, we have
\begin{equation*}
\exp(\ph)^*(\alpha)=\alpha+\{\alpha,\ph\}+\tfrac12\{\{\alpha,\ph\},\ph\}+\cdots,
\end{equation*}
from which it follows that $\exp(\ph)^*(\MDer)\subseteq\MDer$. Note that if $\ph\in C^1_e$,
then $\exp(\ph)$ may not be well defined, but if it is, then it is an automorphism
of $\A$. We call such an automorphism a linear automorphism. If $\ph\in C^k_e$ for some $k>1$,
then $\exp(\ph)$, which is always well defined, is called a \emph{higher order automorphism}.

Every automorphism $g$ of $\bigwedge\A$ is of the form
$g=\lambda\circ\prod_{k=2}^\infty\exp(\ph^k)$ where $\lambda$ is a linear automorphism and
$\ph^k\in C^k_e$ for $k>1$. Note that
 there is no problem with convergence of this infinite
product.
When $\lambda\in\Aut(\A)$ and $\ph^k\in\MDer(\A)$,
then we call $g$ a \emph{multi-automorphism} of $\A$. Note that
$g^*=(\prod_{k=\infty}^2\exp(-\ad_{\ph^k}))\circ\lambda^*$, and that $g^*$ is an automorphism
of $\MDer(\A)$.

If $\psi$ and $\tilde\psi$ are codifferentials,
then we say that $\psi\sim\tilde\psi$ if there is
some automorphism $g$ of $\bigwedge\A$ such that $g^*(\psi)=\tilde\psi$. In this case,
we say that $\psi$ and $\tilde\psi$ are equivalent codifferentials, or that they give
isomorphic \pinf-algebra structures on $\A$.  If  $\psi^k$ is the leading term of
$\psi$ (\ie, the first nonvanishing
term, for the exterior degree), and $\tilde\psi^l$ is the leading term of $\tilde\psi$, then if $\psi\sim\tilde\psi$,
we must
have $k=l$ and $\psi^k$ and $\tilde\psi^k$ must be \emph{linearly equivalent} codifferentials
(that is, equivalent by means of a linear automorphism).
Note that $\psi$ and $\tilde\psi$ need not be linearly equivalent.

\subsection{Deformations and cohomology}

Finally, one of the aims of studying  codifferentials is to study  the deformations (or extensions) of codifferentials, and as for the formal deformation question, there are cohomology spaces that give information about this question. More precisely, this question of deformation/extension is the following:
suppose that $\psi=\psi^k\mplus\psi^m$ is a codifferential\footnote{If $\psi^k\not=0$ is the term in $\psi$ of smallest degree $k$, then we sometimes say that $k$ is the \textit{order} of the codifferential $\psi$.}
and $\alpha=\psi^{m+r}+\phi$, where $\psi^i\in C^i$, $r\ge 1$, $\alpha\in C_o$ and $\phi$ is given by elements of $\MDer(\A)$ which are of (exterior) degree greater or equal to $m+r+1$, then under which conditions, is $d=\psi+\alpha$ itself a codifferential ?
The answer is: $d$ is a codifferential (whose first term is the initial codifferential) iff the Maurer-Cartan equation
\begin{equation*}
D(\alpha)+\tfrac12\{\alpha,\alpha\}=0
\end{equation*}
is satisfied, where $D(\ph)=\{\psi,\ph\}$ is the coboundary operator associated to $\psi$.
Since $D^2=0$, we can define the cohomology $H(\psi)$ determined by $\psi$ by
$$H(\psi)=\ker(D)/\im(D).$$ This cohomology inherits a natural grading, so we have a decomposition $H(\psi)=H_o(\psi)\oplus H_e(\psi)$.

Now suppose that the leading term of $\alpha$ has degree $m+r$, and is the leading
term of a coboundary; \ie, $\alpha+D(\beta)$ has order at least $m+r+1$. Let us
also assume that we can choose $\beta$ to have order at least 2. Then
$\exp(-\ad_\beta)(d)=\psi+\ho$ where the higher order terms have degree at least $m+r+1$.

Let $D_k$ be defined by $D_k(\phi)=\{\psi_k,\phi\}$. Since one has
$\{\psi_k,\psi_k\}=0$, $D_k^2=0$. Since the lowest order term in $D(\alpha)+\tfrac12\{\alpha,\alpha\}$
is $D_k(\psi_{m+r})$, it follows that $\psi_{m+r}$ is a $D_k$-cocycle. If $\psi_{m+r}$ is a $D_k$-coboundary,
then the leading term of $\alpha$ is automatically the leading term of a $D$-coboundary.
In particular, if
$H^n(D_k)=0$ for
all $n>m$, then this condition is automatically satisfied.
Note that  $$H^n(D_k)=\ker(D_k:C^n\ra C^{n+k-1})/\im(D_k:C^{n-k+1}\ra C^n)$$ is well defined because $D_k$ is given by a codifferential consisting of a single term. Of course, when $\psi^k$ is a $\zt$-graded Poisson structure (that is if $\psi^k$ is a single codifferential with $k=2$), then the associated cohomology is the cohomology introduced previously in this paper.

For a general codifferential $\psi$, the definition of $H^n$ is a bit complex,
because $D$ does not respect degrees of codifferentials.  Rather, we have to
consider $C(\A)=\prod_{k=0}^\infty C^k(\A)$ as a filtered complex, with
$FC^n=\prod_{k=n}^\infty C^k$, and then it is true that $D:FC^n\ra FC^{n+1}$.
Usually, we are interested in computing $H(\psi^k)$, where $\psi^k$ is the
first nonvanishing term in $\psi$.

Note that $\psi^k$ is itself a codifferential, and that the cohomology
$H(\psi^k)$ governs extensions of $\psi^k$ to a codifferential with higher
order terms. The cohomology $H(\psi^k)$ has a decomposition in the form
$H=\prod_{n=0}^\infty(H^n)$ where $H^n=Z^n/B^n$ with
\begin{align*}
Z^n&=\ker(D:C^n\ra C^{n+k-1})\\
B^n&=\im(D:C^{n-k+1}\ra C^n).
\end{align*}

 To illustrate these notions, let us talk about  some concrete examples in the $0|1$-dimensional and $1|1$-dimensional cases.

\subsection{The $0|1$-dimensional case.}
 First, on the $0|1$-dimensional polynomial algebra $\A=\k[\theta]=\k\oplus\k\theta$, analogously to the Poisson case (in section \ref{section01}), every odd element $\psi=\psi^1a_1+\psi^2a_2+\cdots$ in $\MDer(\A)$, satisfies
$[\psi,\psi]=0$, and thus $\psi$ determines a $\pinf$-algebra structure on $\A$.
(It is also possible to include a term $\psi^0a_0$ from $C^0_o$ in the definition of a $\pinf$-algebra,
but it is less conventional to do so.)

%
%
Suppose that $\psi=\psi^ka_k+\psi^{l}a_{l}+\ho$, where $k<l$,
is a $\pinf$-algebra structure of
order $k$; in other words, $a_k\ne0$. Then
$$\exp(c\ph^{l-k+1})^*(\psi)=\psi^ka_k+\psi^l(a_l+kc(-1)^{(k-1)(l-k)}a_k)+\ho,$$
so if we choose $c=-(-1)^{(k-1)(l-k)}\frac{a_l}{ka_k}$, we can eliminate the $\psi^l$ term. It follows
that $\psi\sim\psi^ka_k$. Then, applying a linear equivalence to $\psi^ka_k$, we see
that $\psi^ka_k\sim\psi^k$. It follows that up to equivalence, the structures
$\psi^k$, for $k=1\dots$ give rise to all $\pinf$-algebra structures on $\A$.
(We do not consider $\pinf$ structures with a nonzero term in $C^0$ in this paper.)

Another way to see that any codifferential is equivalent to one of the
form $\psi^k$ is to consider the cohomology associated to $\psi^k$. Indeed, if $\psi=\psi^k$ be a codifferential on $\A$, then it is easy to see that
\begin{align*}
H^n(\psi)=
\begin{cases}
\k\psi^n,&0\leq n<k-1\\
0,&n\ge k-1.
\end{cases}
\end{align*}
Notice that for deformation theory, as in the Poisson case, we normally do not include the odd $0$-cochains,
this implies that it is natural to interpret $H_o^{k-1}=Z_o^{k-1}=\k\psi^{k-1}$ and in this particular case, $H^{k-1}=H_o^{k-1}=\k\psi^{k-1}$.

Now, it is easy to see that if $H_o^n(\psi)=0$ for all $n>k$, then every
extension of a codifferential $\psi$ of degree $k$ to a codifferential of leading term
$\psi$ by adding higher order terms must be equivalent to $\psi$.

Moreover, the above calculation shows that the codifferential $\psi^k$ has deformations to all $\psi^n$ for $1\le n<k$, and
this gives the complete deformation picture on the space of all $\pinf$-algebra structures on $\A$.

\subsection{The $1|1$-dimensional case.}

On the $1|1$-dimensional polynomial algebra $\A=\k[x,\theta]$ and analogously to the Poisson case (section \ref{section11}) an odd element $\psi=\sum_{k=1}^\infty\psi^k =\sum_{k=1}^\infty (f_k(x)\theta\px\pt^{k-1}+g_k(x)\pt^k)\in \MDer(\A)$ is a codifferential precisely when either all the $f_k$ (codifferentials \textit{of the first kind}) or all the $g_k$ vanish (codifferentials \textit{of the second kind}).

In order to consider the questions above of equivalence and deformations of codifferentials in the $1|1$-dimensional case, suppose $\psi=\psi^k=g(x)\pt^k$ is a single term codifferential of the first kind ($g(x)\in \k[x]$).
Very analogous methods as in the Poisson case (section \ref{section11first}) lead to
\begin{equation*}
H^n_o(\psi)=
\begin{cases}
\k[x]\, \pt^n& n<k-1,\\
\vspace*{-0.3cm}
{}\\
\vspace*{-0.3cm}
\displaystyle{\frac{\k[x]}{(g(x))}}\, \pt^{k-1} \;&n=k-1,\\
{}\\
\displaystyle{\frac{\k[x]}{(h(x))}}\, \pt^{n} &n\ge k,
\end{cases}
\end{equation*}
and
\begin{align*}
H^n_e(\psi)=
\begin{cases}
0&n=0,\\
\k[x](kp(x)\px\pt^{n-1}+q(x)\theta\pt^n)&0<n<k,\\
\vspace*{-0.5cm}
{}\\
\displaystyle{\frac{\k[x]}{(h(x))}(kp(x)\,\px\pt^{n-1}+q(x)\theta\,\pt^n)}\;&n\ge k,
\end{cases}
\end{align*}
where $h(x)=\gcd(g(x),g'(x))$ measures the \emph{singularity} of the codifferential $\psi$.
If for deformation theory, we do not include the odd 0-cochains, then we interpret $H^{k-1}_o=Z^{k-1}_o=\k[x]\pt^{k-1}$.

As the action of a linear automorphism
on $\psi^k$ is given by
\begin{align*}
\exp(c\theta\pt)^*(g(x)\pt^k)&=\exp(ck)g(x)\pt^k\\
\exp((ax+b)\px)^*(g(x)\pt^k)&=g(rx+s)\pt^k
\end{align*}
where $r=e^{a}$ and $s=\frac{e^{a}-1}{a}b$, we have $\tilde\psi^k=\tilde g(x)\pt^k$
is equivalent to $\psi^k$ iff $\tilde g(x)=Cg(Ax+B)$ for some constants $A$, $B$ and $C$,
where $A$ and $C$ don't vanish.
It follows that the singularity of $\tilde\psi$ is
given by $\tilde h(x)=Ch(Ax+B)$.
Note that in the absence of a $c\theta\pt$ term in $\ph^1$, we only obtain automorphisms
of the form $g(x)\mapsto g(Ax+B)$. Thus the \zt-grading introduces a new kind of automorphism
of codifferentials, which we do not see in the nongraded case. Note that the
ideal $(\tilde h(x))$ generated by the transformed singularity is just
$(h(Ax+B))=\exp((ax+b)\px)(h(x))$, so
we don't see anything new on the ideal level. This says that the nature of the singularity
of the codifferential
remains unchanged under automorphisms, in other words, equivalent codifferentials of the first kind have
equivalent singularities. There is a natural isomorphism between
the quotients $\k[x]/(h(x))$ and $\k[x]/(\tilde h(x))$, in other words,
equivalent codifferentials of the first kind have the same cohomology.

Finally, using these results, we study particular examples of deformations of codifferentials.

\begin{exa}
Let $\psi=x^2\pt$, so that $h_1(x)=x$.
It follows that $H^n_o(\psi)=\k[x]/(x)\pt^n$ for $n\ge 1$. As a consequence
we can extend $\psi$ nontrivially by adding $\psi^k=c\pt^k$ for any $k>1$ and any nonzero
constant $c$.

For example, let $\psi'=x^2\pt+\pt^2$. Then if $\ph^k=a_k(x)\px\pt^{k-1}+b_k(x)\theta\pt^k$,
we have
\begin{equation*}
\{\psi',\sum\ph^k\}=x^2b_0+\sum_{k\ge0}(x^2b_{k+1}(x)-2xa_{k+1}(x)+\s{k+1}2b_k(x))\pt^{k+1}.
\end{equation*}
From this relation, we see that
$B^0_o(\psi')=x^2\k[x]$,
$B^1_o(\psi')=x\k[x]\pt$, while
$B^k_o(\psi')=\k[x]\pt^k$ for $k>1$. This last
condition follows from the fact that if $k>1$, then setting $a_k(x)=\tfrac12xb_k(x)$,
we obtain that $\{\psi,\ph^k\}=\s{k+1}2b_k\pt^{k+1}$, so every codifferential of the
first kind of degree greater than 1 is a coboundary. This means that $H^k_o(\psi')=0$ for $k>1$.
As a consequence, every extension of $\psi'$ is equivalent to $\psi'$.

On the other hand, suppose that $\psi'=x^2\pt+x\pt^2$.
Note that we have added a 2-coboundary term to $\psi$, so it may seem that we ought to obtain
something equivalent to $\psi$. In fact, this statement is true up to higher order terms,
because if $\ph^2=\tfrac12\px\pt$ we apply $\exp(\ph^2)^*$ to $\psi'$ we add $-x\pt^2$
plus higher order terms, so we see that $\psi'$ is equivalent to $\psi$ up to terms
of order 3.

Note that we cannot apply the same reasoning to
$\psi'=(x^2+x)\pt$, because an exponential of a first order term $\ph^1$ contributes
an infinite number of terms of the same degree. In fact, we already computed the effect of
such an exponential, and it can only change $x^2$ into a polynomial of the form $a(x+b)^2$,
where $a$ is a nonzero constant, and therefore we cannot obtain $x^2+x$ as the coefficient
of $\pt$ term in the exponential of $\psi$.
\end{exa}
\begin{exa}
Let $\psi=x^3\pt$. Then $H^n_o(\psi)=\displaystyle{\frac{\k[x]}{(x^2)}}\,\pt^n$ for $n\geq1$.
Let $\psi'=x^3\pt+x\pt^2$, so that $\psi'$ is a nontrivial extension of $\psi$.
One computes that $H^2(\psi')=\displaystyle{\frac{\k[x]}{(x)}}\,\pt^n$, while $H^3(\psi')=(0)$. This implies that $\psi'$ is a maximal extension of $\psi$, at least up to order $4$.
\end{exa}

In general, one can show that if we have a codifferential of the form
$\psi=\psi^{k_1}\mplus\psi^{k_m}$, where $\psi^{k_{m+1}}$ is a nontrivial
codifferential in $H^{m+1}(\psi^{k_1}\mplus \psi^{k_m})$, then there is an
upper bound on $m$. Thus any codifferential is equivalent to one with a finite
number of terms.

The versal deformation
of the $\pinf$ algebra determined by $\psi$ coincides with the
infinitesimal deformation, because the brackets of
odd coderivations of the first kind with each other always vanishes,
so there is no obstruction to the deformation.

\nocite{*}
\bibliographystyle{amsplain}
\bibliography{global}

\providecommand{\bysame}{\leavevmode\hbox to3em{\hrulefill}\thinspace}
\providecommand{\MR}{\relax\ifhmode\unskip\space\fi MR }
\providecommand{\MRhref}[2]{%
  \href{http://www.ams.org/mathscinet-getitem?mr=#1}{#2}
}
\providecommand{\href}[2]{#2}
\begin{thebibliography}{10}

\bibitem{Catt-Scha}
Alberto~S. Cattaneo and Florian Sch\"atz, \emph{Introduction to supergeometry},
  Reviews in Mathematical Physics. A Journal for Both Review and Original
  Research Papers in the Field of Mathematical Physics \textbf{23} (2011),
  no.~6, 669--690.

\bibitem{CoxLittleOShea}
David~A. Cox, John Little, and Donal O'Shea, \emph{Using algebraic geometry},
  Graduate Texts in Mathematics, vol. 185, Springer, New York, 2005.

\bibitem{Azca}
J.~A. de~Azc\'arraga, J.~M. Izquierdo, A.~M. Perelomov, and J.~C.
  P\'erez-Bueno, \emph{The {$Z_2$}-graded {S}chouten-{N}ijenhuis bracket and
  generalized super-{P}oisson structures}, Journal of Mathematical Physics
  \textbf{38} (1997), no.~7, 3735--3749.

\bibitem{Hueb}
Johannes Huebschmann, \emph{Poisson cohomology and quantization}, Journal f\"ur
  die Reine und Angewandte Mathematik. [Crelle's Journal] \textbf{408} (1990),
  57--113.

\bibitem{Kont}
Maxim Kontsevich, \emph{Deformation quantization of {P}oisson manifolds},
  Letters in Mathematical Physics \textbf{66} (2003), no.~3, 157--216.

\bibitem{Lichne}
Andr\'e Lichnerowicz, \emph{Les vari\'et\'es de {P}oisson et leurs alg\`ebres
  de {L}ie associ\'ees}, Journal of Differential Geometry \textbf{12} (1977),
  no.~2, 253--300.

\bibitem{Monnier}
Philippe Monnier, \emph{Poisson cohomology in dimension two}, Israel J. Math.
  \textbf{129} (2002), 189--207.

\bibitem{Pelap}
Serge Rom{\'e}o~Tagne Pelap, \emph{Poisson (co)homology of polynomial {P}oisson
  algebras in dimension four: {S}klyanin's case}, J. Algebra \textbf{322}
  (2009), no.~4, 1151--1169.

\bibitem{Penkava2}
Michael Penkava, \emph{Infinity algebras, cohomology and cyclic cohomology, and
  infinitesimal deformations}, arXiv:math/0111088.

\bibitem{Penkava1}
\bysame, \emph{L-infinity algebras and their cohomology}, arXiv:q-alg/9512014.

\bibitem{Perrin}
Daniel Perrin, \emph{G\'eom\'etrie alg\'ebrique}, InterEditions, Paris; CNRS
  \'Editions, Paris, 1995.

\bibitem{Pichereau1}
Anne Pichereau, \emph{Poisson (co)homology and isolated singularities}, J.
  Algebra \textbf{299} (2006), no.~2, 747--777.

\bibitem{Pichereau2}
\bysame, \emph{Formal deformations of {P}oisson structures in low dimensions},
  Pacific J. Math. \textbf{239} (2009), no.~1, 105--133.

\bibitem{RogerVanhaecke}
Claude Roger and Pol Vanhaecke, \emph{Poisson cohomology of the affine plane},
  J. Algebra \textbf{251} (2002), no.~1, 448--460.

\bibitem{Rogers}
Alice Rogers, \emph{Supermanifolds}, World Scientific Publishing Co. Pte. Ltd.,
  Hackensack, NJ, 2007.

\bibitem{Stasheff}
Jim Stasheff, \emph{The intrinsic bracket on the deformation complex of an
  associative algebra}, J. Pure Appl. Algebra \textbf{89} (1993), no.~1 - 2,
  231--235.

\bibitem{Voro}
Theodore Voronov, \emph{Higher derived brackets and homotopy algebras}, Journal
  of Pure and Applied Algebra \textbf{202} (2005), no.~1-3, 133--153.

\end{thebibliography}
\end{document}